\def\clap#1{\hbox to 0pt{\hss#1\hss}}
\newtheorem{theorem}{Theorem}[section]
\newtheorem{lemma}[theorem]{Lemma}
\newtheorem{proposition}[theorem]{Proposition}
\newtheorem{corollary}[theorem]{Corollary}
\theoremstyle{definition}
\newtheorem{remark}[theorem]{Remark}
\newtheorem{definition}[theorem]{Definition}
\newtheorem{example}[theorem]{Example}
\definecolor{internalLink}{rgb}{0.5,0,0}
\definecolor{citeLink}{rgb}{0,0.5,0}
\definecolor{urlLink}{rgb}{0,0,0.5}
\DeclareMathOperator{\id}{id}		
\DeclareMathOperator{\im}{im}		
\DeclareMathOperator{\dom}{dom}	
\DeclareMathOperator{\coker}{coker}		
\newcommand*{\Z}{\mathbb{Z}}				
\newcommand{\quotient}[2]{\raisebox{0.9ex}{$#1$}\!\Big/\!\raisebox{-0.9ex}{$#2$}}
\def\KhBracket{\@ifstar\KhBracketScaled\KhBracketSimple}
\newcommand*{\KhBracketScaled}[1]{\left\llbracket#1\right\rrbracket}
\newcommand*{\KhBracketSimple}[1]{\llbracket#1\rrbracket}
\let\diff\partial
\def\arXiv{\@ifstar\arXiv@@\arXiv@}
\def\arXiv@#1{\href{http://front.math.ucdavis.edu/#1}{arXiv:#1}}
\def\arXiv@@#1#2{\href{http://front.math.ucdavis.edu/#2}{arXiv:#1}}
\def\qnt#1{\,\overline{\!#1}}
\def\pbit#1{\pb@it#1,\@pbstop,,\@nil}
\def\pb@it#1,#2,#3,#4\@nil{
	\ifx\@pbstop#2\relax
		(#1_1\rightarrow\bar#1\leftarrow#1_2)%
	\else
		(#1\rightarrow#2\leftarrow#3)%
	\fi
}
\def\pb#1{\pb@d#1;\@pbstop;\@nil}
\def\pb@d#1;#2;#3\@nil{
	\ifx\@pbstop#2\relax
		\pb@d@nomaps#1,\@pbstop,,\@nil
	\else
		\pb@d@maps#1,\@pbstop,,;#2,\@pbstop,\@nil
	\fi
}
\def\pb@d@nomaps#1,#2,#3,#4\@nil{
	\ifx\@pbstop#2\relax
		\pb@@d#1_1,\qnt #1,#1_2;\@pbstop,,\@nil
	\else
		\pb@@d#1,#2,#3;\@pbstop,,\@nil
	\fi
}
\def\pb@d@maps#1,#2,#3,#4;#5\@nil{
	\ifx\@pbstop#2\relax
		\pb@@d#1_1,\qnt #1,#1_2;#5\@nil
	\else
		\pb@@d#1,#2,#3;#5\@nil
	\fi
}
\def\pb@@d#1,#2,#3;#4,#5,#6\@nil{
	\ifx\@pbstop#4\relax
		#1\longrightarrow#2\longleftarrow#3%
	\else\ifx\@pbstop#5\relax
		#1\stackrel{#4_1}\longrightarrow#2\stackrel{#4_2}\longleftarrow#3%
	\else
		#1\stackrel{#4}\longrightarrow#2\stackrel{#5}\longleftarrow#3%
	\fi\fi
}
\begin{document}

\title{Separated presentations of modules over pullback rings}

\author[Krzysztof K. Putyra]{Krzysztof K. Putyra}
\address{Department of Mathematics, Columbia University\\ New York, NY 10027}
\email{putyra@math.columbia.edu}

\begin{abstract}
	We define pullback and separated presentations of modules over pullback rings, and,
	if the~ring is a~pullback of epimorphisms over a~semisimple ring, an~algorithm
	reducing such a~presentation of a~module to an~$R$-diagram. The~latter is the~%
	input for a~classification algorithm of finitely generated modules over a~pullback
	ring of two Dedekind domains. As an~example we show how to obtain an~$R$-diagram
	for homology of a~chain complex of free modules over a~$p$-pullback ring.
\end{abstract}

\maketitle

\section{Introduction}\label{sec:intro}

In the~seminal paper \cite{Mixed ZG modules} Levy described an~algorithmic classification
of finitely generated modules over certain \emph{pullback rings}, defined as pullbacks of
diagrams of ring epimorphisms
\begin{equation}
	\pb{R;v},
\end{equation}
If each $R_i$ is a~Dedekind domain and $\qnt R$ is a~field, every finitely generated
module $M$ can be represented by a~collection of four homomorphisms
\begin{equation}
	\bfig
		\Atrianglepair/->``->`->`<-/<500,300>[%
			K`S_1`\qnt S`S_2;q_1``q_2`p_1`p_2
		]
	\efig
\end{equation}
called an~$R$-diagram for $M$, where $K$ and $\qnt S$ are finitely generated vector spaces
over $\qnt R$, each $S_i$ is a~finitely generated $R_i$-module, $p_i$ are $R_i$-linear
epimorphisms, and $q_i$ are $R_i$-linear monomorphisms, satisfying certain properties
(see Proposition~\ref{prop:R-diagrams}).
These diagrams can be faithfully translated into collections of four matrices over $\qnt R$,
and the~problem of determining whether two such collections represent isomorphic modules can
be solved algorithmically.

Unfortunately, in the~real life we usually do not have an~$R$-diagram of a~module. In~%
particular, a~chain complex of free modules provides only a~description of a~homology as
a~quotient of separated modules, which is far from the~input of the~Levy's classifying
algorithm. Our main motivation is to compute generalized Khovanov homology \cite{Khovanov,
ChCob}, which is defined over the~group ring $\Z[S_2]$. Therefore, we decided to broaden
the~class of admissible presentations, allowing to encode an~$R$-module as a~quotient
of any two separated modules. A~submodule of a~separated module is always separated,
(see Corollary~\ref{cor:separated-submodule}) which automatically gives us a~separated
presentation of homology modules, assuming chain modules are separated (a~free module
is always separated). The~main result of this paper is the~reduction procedure: it takes
as input any separated presentation of a~module $M$, and returns its $R$-diagram. It works
for every pullback ring $R$ as long as $\qnt R$ is semisimple. On a~side, we prove a~few
facts about separated modules and homomorphisms between them.

The~paper is organized as follows. We start with a~brief section on pullback rings,
introducing some notation, and showing how to recover a~pullback diagram from a~subdirect
sum description. Section~\ref{sec:separated} introduces pullback and separated modules,
and describes their basic properties: a~submodule of a~separated module is separated,
a~pullback description of a~separated module is unique, and that every homomorphism of
separated modules has a~unique pullback description. Morphisms between pullback modules
are analyzed in Section~\ref{sec:morphisms}. In particular, we characterize monomorphisms
in terms of their pullback descriptions, and we give a~few conditions for a~map to be an~%
epimorphism. These are used in Section~\ref{sec:general-modules}, in which we introduce
various presentations of a~generic $R$-module, and show how to reduce them to $R$-diagrams.
The~last section is devoted to computation of an~$R$-diagram for homology modules of
a~chain complex of modules over a~$p$-pullback ring, i.e. the~subring of $\Z\oplus\Z$
formed by pairs $(m,n)$ with $m\equiv n\ (\mathrm{mod}\,p)$.

\section{Pullback rings as subdirect summands}\label{sec:pullback-rings}

Let $R$ be a~pullback diagram defined by a~diagram of ring epimorphisms
\begin{equation}
	\pb{R;v}.
\end{equation}
It is easy to check that $R$ is a~subring of $R_1\oplus R_2$ consisting of elements
$(r_1,r_2)\in R_1\oplus R_2$ satisfying $v_1(r_1) = v_2(r_2)$. In~particular, every
module over $R_1\oplus R_2$ is also an~$R$-module. Following \cite{Mixed ZG modules} we
use the~notation $\pbit{r}$ for elements of $R$, where $\bar r = v_i(r_i)$. The ring
$R$ is a~\emph{subdirect sum} of $R_1\oplus R_2$ \cite{Subdirect sums}, i.e. each $R_i$
is an~image of $R$ under the~projection $R_1\oplus R_2\to R_i$. Conversely, every subdirect
sum of $R_1\oplus R_2$ is a~pullback ring, see \cite{Subdirect sums}. Here we show only how
to recover the~pullback diagram of $R$, regarded as a~subdirect sum of $R_1\oplus R_2$.

\begin{lemma}
	Consider a~pullback ring $R$ as a~subring of $R_1\oplus R_2$ and put $P_i=R\cap R_i$.
	Then $\qnt R = R / (P_1\oplus P_2)$.
\end{lemma}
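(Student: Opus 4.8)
The plan is to exhibit an explicit surjection $R\to\qnt R$ whose kernel is exactly $P_1\oplus P_2$, and then invoke the first isomorphism theorem. First I would unwind the definitions of $P_1$ and $P_2$ under the identification of $R_i$ with its copy $R_1\oplus 0$, resp. $0\oplus R_2$, inside $R_1\oplus R_2$. An element $(r_1,r_2)\in R$ lies in $R\cap R_1$ iff $r_2=0$, and then the defining condition $v_1(r_1)=v_2(r_2)=0$ forces $r_1\in\ker v_1$; hence $P_1=\ker v_1\oplus 0$ and, symmetrically, $P_2=0\oplus\ker v_2$. Consequently $P_1\oplus P_2=\ker v_1\oplus\ker v_2$, which is an ideal of $R_1\oplus R_2$ sitting inside $R$, so in particular it is an ideal of $R$ and the quotient $R/(P_1\oplus P_2)$ makes sense.

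Next I would introduce the evaluation map $\varepsilon\colon R\to\qnt R$ sending $\pbit{r}\mapsto\bar r$, i.e. $(r_1,r_2)\mapsto v_1(r_1)=v_2(r_2)$; this is well defined precisely by the subdirect-sum condition, and it is a ring homomorphism because each $v_i$ is. Surjectivity of $\varepsilon$ uses that $v_1$ and $v_2$ are epimorphisms: given $\bar r\in\qnt R$, choose preimages $r_i\in R_i$ with $v_i(r_i)=\bar r$; then $(r_1,r_2)\in R$ and $\varepsilon(r_1,r_2)=\bar r$. For the kernel, $(r_1,r_2)\in\ker\varepsilon$ iff $v_1(r_1)=0$, which together with $v_1(r_1)=v_2(r_2)$ is equivalent to $r_1\in\ker v_1$ and $r_2\in\ker v_2$; thus $\ker\varepsilon=\ker v_1\oplus\ker v_2=P_1\oplus P_2$. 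The first isomorphism theorem then yields $R/(P_1\oplus P_2)\cong\qnt R$.

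There is no genuinely hard step here; the only point that deserves care is making sure the identifications are consistent — that $R_i$ is being viewed as the $i$-th summand of $R_1\oplus R_2$ rather than as an abstract ring, and that "epimorphism" is used only in the sense of a surjective ring map, which is all the argument requires. I would also remark that the isomorphism is canonical, given by $\varepsilon$, so the notation $\qnt R = R/(P_1\oplus P_2)$ is justified on the nose and not merely up to isomorphism.
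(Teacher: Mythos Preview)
Your proposal is correct and follows essentially the same approach as the paper: both identify $P_i$ with the copy of $\ker v_i$ inside $R$, define the evaluation map $R\to\qnt R$ sending $(r_1,r_2)\mapsto v_1(r_1)=v_2(r_2)$, and conclude by computing its kernel and image. Your write-up is somewhat more explicit about the identifications and about why surjectivity holds, but the underlying argument is identical.
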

\begin{proof}
	Because $P_1\cap P_2 = 0$, $P_1\oplus P_2$ is an~ideal of $R$. Clearly, $P_1 = (\ker v_1, 0)$
	and similarly for $P_2$. Hence, the~kernel of the~map
	\begin{equation}
		R\to \qnt R\qquad\qquad (r_1,r_2)\mapsto v_1(r_1) \quad \left(\ =v_2(r_2)\right)
	\end{equation}
	is precisely $P_1\oplus P_2$. Since the~map above is surjective, the~thesis follows.
\end{proof}

\section{Pullback and separated modules}\label{sec:separated}

Choose $R_i$-modules $S_i$ ($i=1,2$) and an $\qnt R$-module $\qnt S$. Because $v_i$ equips
the~latter with a~structure of an $R_i$-module, it makes sense to consider diagrams
\begin{equation}\label{diag:separated-module}
	\pb{S;p}
\end{equation}
where each $p_i$ is $R_i$-linear. Let $S$ be a~pullback of \eqref{diag:separated-module}
regarded as a~diagram of abelian groups. It is an~$R$-module with
\begin{equation}
	\pbit{r}\pbit{s} := \pbit{r_1s_1,\bar r\bar s,r_2s_2}.
\end{equation}

\begin{definition}\label{def:module-classes}
	A~module $S$ given as above is called a~\emph{pullback module} and the~diagram \eqref{diag:%
	separated-module} is a~\emph{pullback diagram} for $S$. We say the~diagram is \emph{preseparated}
	if each $p_i$ is surjective, and \emph{separated} if also $\ker p_i = P_iS_i$. Accordingly,
	the~module $S$ is called a~\emph{preseparated} or a~\emph{separated module}.
\end{definition}

\noindent
Pullback modules are precisely the~$R$-submodules of $S_1\oplus S_2$, and $S$ is a~subdirect
sum of $S_1\oplus S_2$ if and only if the~maps in $\pb{S}$ are surjective, i.e. when it is a~%
preseparated diagram for $S$. In fact, every $R$-submodule of $S_1\oplus S_2$ is a~separated
module. In~particular, this applies to pullback modules, showing that all three classes from
Definition~\ref{def:module-classes} are equal.

\begin{lemma}\label{lem:pullback-is-separated}
	For $i=1,2$ choose $R_i$-modules $T_i$ and let $S$ be an $R$-submodule of\/
	$T_1\oplus T_2$. Then $S$ has a~separated diagram $\pb{S;p}$.
\end{lemma}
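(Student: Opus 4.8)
The plan is to construct the separated diagram directly from the ideals $P_1,P_2\subseteq R$, rather than from the projections $\pi_i\colon S\to T_i$ (the obvious choice $S_i=\pi_i(S)$ with $\qnt S = S/(\ker\pi_1+\ker\pi_2)$ need not be separated — e.g. for $S=T_1\oplus 0$ the diagram collapses to $\qnt S=0$ while $\ker p_1=S_1$). For $\{i,j\}=\{1,2\}$ I set
\[
  S_i := S/P_jS,\qquad \qnt S := S/(P_1S+P_2S),
\]
and let $p_i\colon S_i\to\qnt S$ be the canonical quotient maps. Recall from Section~\ref{sec:pullback-rings} that $P_1=(\ker v_1,0)$ and $P_2=(0,\ker v_2)$; in particular the projection $R\to R_i$ onto the $i$th factor is onto with kernel $P_j$, so $R_i\cong R/P_j$, and, as shown there, $\qnt R\cong R/(P_1\oplus P_2)$. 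Since $P_j$ annihilates $S/P_jS$ and $P_1\oplus P_2$ annihilates $\qnt S$, the module $S_i$ is naturally an $R_i$-module, $\qnt S$ is an $\qnt R$-module, and each $p_i$ is $R_i$-linear and (obviously) surjective. Finally $\ker p_1 = (P_1S+P_2S)/P_2S = P_1\cdot(S/P_2S)=P_1S_1$, and symmetrically $\ker p_2 = P_2S_2$, so $\pb{S;p}$ is a separated diagram in the sense of Definition~\ref{def:module-classes}. What remains is to identify its pullback with $S$.

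To that end I consider the $R$-linear comparison map
\[
  \phi\colon S\longrightarrow S_1\times_{\qnt S}S_2,\qquad s\longmapsto(s+P_2S,\ s+P_1S),
\]
which lands in the pullback because $p_1(s+P_2S) = s+(P_1S+P_2S) = p_2(s+P_1S)$. Surjectivity is formal: lift a given pair $(\bar s,\bar s')$ in the pullback to $s,s'\in S$, note that $s-s'\in P_1S+P_2S$, write $s-s'=a+b$ with $a\in P_1S$ and $b\in P_2S$, and observe that $t:=s-b=s'+a$ satisfies $\phi(t)=(\bar s,\bar s')$. Since $\ker\phi = P_1S\cap P_2S$, the lemma follows once we show $P_1S\cap P_2S=0$, and this is the single point where the hypothesis is used.

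Here the embedding $S\subseteq T_1\oplus T_2$ does all the work. Viewing $R$ as a subring of $R_1\oplus R_2$ and $S$ as a subgroup of $T_1\oplus T_2$, an element $(r_1,0)$ with $r_1\in\ker v_1$ sends $(t_1,t_2)\in S$ to $(r_1t_1,0)$, so $P_1S\subseteq T_1\oplus 0$; symmetrically $P_2S\subseteq 0\oplus T_2$. Hence $P_1S\cap P_2S\subseteq(T_1\oplus0)\cap(0\oplus T_2)=0$, so $\phi$ is an isomorphism, and identifying $S$ with the pullback through $\phi$ shows that $\pb{S;p}$ is the desired separated diagram for $S$.

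I expect essentially all of the mathematical content to sit in the last paragraph; the first paragraph is bookkeeping with the identifications $R_i\cong R/P_j$ and the module structures on $S/P_jS$, and the second is a routine diagram chase. The one real idea is that $P_1S$ and $P_2S$ automatically land in complementary direct summands of $T_1\oplus T_2$, so they can only meet in $0$ — it is precisely the hypothesis that $S$ sits inside $T_1\oplus T_2$, rather than being an abstract $R$-module, that forces the comparison map to be injective and thereby promotes the tautologically separated diagram $\pb{S;p}$ to an honest pullback diagram for $S$.
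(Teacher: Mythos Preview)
Your proof is correct and follows essentially the same route as the paper: you define $S_i=S/P_jS$, $\qnt S=S/(P_1S+P_2S)$, check that the diagram is separated, and identify $S$ with the pullback via the comparison map, using $P_1S\cap P_2S=0$ (which follows from $P_iS\subseteq T_i\oplus 0$ inside $T_1\oplus T_2$) for injectivity and the decomposition $s-s'=a+b$ for surjectivity. The paper's argument is the same, only with the comparison map left implicit and the key fact $P_1S\cap P_2S=0$ stated without the one-line justification you supply.
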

\begin{proof}
	The~quotient $S_1 = S/P_2S$ is an~$R_1$-module with $r_1[s] := [(r_1,r_2)s]$ for any
	$(r_1,r_2)\in R$; the~action is well defined, since $(0,r)\in P_2$ acts trivially on
	$S_1$. In a~similar way we construct an~$R_2$-module $S_2$. We can see $S$ as a~submodule
	of $S_1\oplus S_2$, since $P_1S\cap P_2S=0$ reveals the~canonical homomorphism
	$S\to S_1\oplus S_2$ is injective.

	The~quotient group $\qnt S = S/(P_1S + P_2S)$ is an~$\qnt R$-module: an~element $\pbit
	{r_1,0,r_2}$ acts trivially on $\qnt S$. Because $P_iS\subset\ker(S\to\qnt S)$, there is
	an~induced surjective homomorphism $p_i\colon S_i\to\qnt S$. The~module $S$, when regarded
	as a~submodule of $S_1\oplus S_2$, consists of pairs $(s_1,s_2)$ of elements with the~same
	image in $\qnt S$. Indeed, $p_1([s_1]) = p_2([s_2])$ in $\qnt S$ implies
	\begin{equation}
		s_1-s_2 \in P_1S+P_2S \quad\Rightarrow\quad s_1 = s_2 + a_1s' + a_2s''
	\end{equation}
	for some elements $a_i\in P_i$ and $s',s''\in S$, so that the~pair $(s_1+P_2S, s_2+P_1S)$
	is an~image of $s_1-a_2s'' = s_2-a_1s' \in S$. Hence, $S$ is the~pullback of $\pb{S;p}$.
	Finally, $\ker p_i=P_iS_i$, since $p_i([s]) = 0$ holds if and only if $s\in P_1S+P_2S$.
\end{proof}

\begin{corollary}
	The~pullback of~\eqref{diag:separated-module} is a~separated module for any
	$R_i$-linear maps $p_i$.
\end{corollary}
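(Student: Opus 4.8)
The plan is to reduce the statement to Lemma~\ref{lem:pullback-is-separated}. The key point is that the pullback $S$ of~\eqref{diag:separated-module}, taken in abelian groups and equipped with the $R$-module structure described above, is canonically an $R$-submodule of $S_1\oplus S_2$. First I would make this identification explicit: an element $\pbit{s}$ of the pullback satisfies $\bar s=p_1(s_1)=p_2(s_2)$, so the coordinate $\bar s$ is redundant, and $\pbit{s}\mapsto(s_1,s_2)$ identifies $S$ with the subset $\{(s_1,s_2)\in S_1\oplus S_2 : p_1(s_1)=p_2(s_2)\}$.

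Next I would verify that this subset is an $R$-submodule of $S_1\oplus S_2$. Since each $p_i$ is $R_i$-linear and the $R_i$-action on $\qnt S$ is the one induced by $v_i$, for $\pbit{r}\in R$ we have $p_i(r_is_i)=v_i(r_i)\,p_i(s_i)=\bar r\,p_i(s_i)$; hence $p_1(s_1)=p_2(s_2)$ forces $p_1(r_1s_1)=p_2(r_2s_2)$, so the subset is closed under the $R$-action, and it is plainly a subgroup. Because the action on the pullback is componentwise, the identification above is an isomorphism of $R$-modules, exhibiting $S$ as an $R$-submodule of $S_1\oplus S_2$. Lemma~\ref{lem:pullback-is-separated} applied with $T_i=S_i$ then produces a separated diagram for $S$, which is the assertion.

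I do not anticipate any genuine difficulty here: the corollary is little more than a restatement of the lemma once the pullback has been recognised as a submodule of $S_1\oplus S_2$. The one point worth flagging is that the separated diagram furnished by the lemma is assembled from the quotients $S/P_2S$, $S/P_1S$ and $S/(P_1S+P_2S)$, and in general is not the diagram~\eqref{diag:separated-module} one started from; the two coincide precisely when~\eqref{diag:separated-module} is itself separated. It may be worth adding a short remark to that effect so that the given diagram is not conflated with the canonical one.
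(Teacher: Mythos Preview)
Your argument is correct and is exactly the intended one: the paper states this corollary without proof, as an immediate consequence of Lemma~\ref{lem:pullback-is-separated} applied with $T_i=S_i$. Your additional remark that the separated diagram produced by the lemma need not coincide with the original~\eqref{diag:separated-module} is accurate and worth keeping.
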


\begin{corollary}\label{cor:separated-submodule}
	A~submodule of a~separated module is again separated. In particular, for a~linear map
	$f\colon M\to N$ we have the~following:
	\begin{itemize}
		\item if $M$ is separated, then $\ker f$ is separated, and
		\item if $N$ is separated, then $\im f$ is separated.
	\end{itemize}
\end{corollary}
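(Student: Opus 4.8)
The plan is to deduce Corollary~\ref{cor:separated-submodule} directly from Lemma~\ref{lem:pullback-is-separated}, by exhibiting any submodule of a separated module as an $R$-submodule of a direct sum $T_1\oplus T_2$ of $R_i$-modules. Suppose $N$ is separated, so by definition it is the pullback of a diagram $\pb{N;p}$, and in particular $N$ is an $R$-submodule of $N_1\oplus N_2$, where each $N_i$ is an $R_i$-module. If $S\subseteq N$ is any $R$-submodule, then $S$ is also an $R$-submodule of $N_1\oplus N_2$; applying Lemma~\ref{lem:pullback-is-separated} with $T_i=N_i$ immediately produces a separated diagram $\pb{S;p}$ for $S$. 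This proves the first assertion.

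For the two bulleted consequences, I would simply apply the first assertion to the relevant submodule. If $M$ is separated, then $\ker f$ is an $R$-submodule of $M$, hence separated. If $N$ is separated, then $\im f$ is an $R$-submodule of $N$, hence separated. Both follow with no further work.

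The only point that needs a word of care — and this is the sole potential obstacle — is the implicit claim that \emph{every} separated module genuinely sits inside a direct sum of the form $T_1\oplus T_2$ with $T_i$ an $R_i$-module, so that Lemma~\ref{lem:pullback-is-separated} applies. This is exactly the content of Definition~\ref{def:module-classes} together with the discussion following it: a separated module $S$ comes with a separated diagram $\pb{S;p}$, and the canonical map $S\to S_1\oplus S_2$ identifies $S$ with an $R$-submodule of $S_1\oplus S_2$ (injectivity being precisely the pullback property, since $P_1S_1\cap P_2S_2$ maps to $0$). With that identification in hand the corollary is immediate, so in the write-up I would state the reduction in one or two sentences and invoke Lemma~\ref{lem:pullback-is-separated}.

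\begin{proof}
	Let $N$ be a~separated module with separated diagram $\pb{N;p}$. The~canonical
	homomorphism $N\to N_1\oplus N_2$ is injective, so $N$ is an~$R$-submodule of
	$N_1\oplus N_2$. If $S\subseteq N$ is an~$R$-submodule, then $S$ is likewise an~
	$R$-submodule of $N_1\oplus N_2$, and Lemma~\ref{lem:pullback-is-separated},
	applied with $T_i=N_i$, provides a~separated diagram for $S$. For the~remaining
	statements, observe that $\ker f$ is a~submodule of $M$ and $\im f$ is a~submodule
	of $N$, so both are separated by what was just shown.
\end{proof}
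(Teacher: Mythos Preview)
Your argument is correct and is exactly the intended one: the paper states this corollary without proof, immediately after Lemma~\ref{lem:pullback-is-separated} and the remark that pullback modules are precisely the $R$-submodules of $S_1\oplus S_2$, so the deduction you spell out is the one being left to the reader.
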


\noindent
As we shall see later, separated modules are not closed under quotients. In fact, every
$R$-module is a~quotient a~separated module, see Section~\ref{sec:general-modules}.

We shall now proceed to showing that a~separated diagram \eqref{diag:separated-module} for
a~pullback module $S$ is unique. For that we need a~notion of a~separated homomorphism.

\begin{definition}
	A~homomorphism $f\colon S\to T$ between separated $R$-modules is \emph{separated}
	if it is induced by $R_i$-linear maps $f_i\colon S_i\to T_i$ for some subdirect sum
	presentations $S\subset S_1\oplus S_2$ and $T\subset T_1\oplus T_2$.
\end{definition}

\noindent
It is not difficult to see that both homomorphisms $f_i$ induce the~same $\bar f\colon\bar S
\to\bar T$, where $S = \left(\pb{S}\right)$ and $T = \left(\pb{T}\right)$. Hence, a~homomorphism
is separated, if and only if it is a~pullback of a~commuting diagram of linear maps
\begin{equation}
	\bfig
		\hsquares/->`<-`->`->`->`->`<-/<500,500,400>[%
			S_1`\qnt S`S_2`T_1`\qnt T`T_2;%
			p_1`p_2`f_1`\bar f`f_2`q_1`q_2%
		]
	\efig
\end{equation}

\begin{proposition}\label{prop:morphism-is-sep}
	Every map between separated modules is separated.
\end{proposition}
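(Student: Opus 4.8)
The plan is to show that an arbitrary $R$-linear map $f\colon S\to T$ between separated modules can be realized as a pullback of a commuting cube, by exploiting the canonical nature of the separated diagram constructed in Lemma~\ref{lem:pullback-is-separated}. The key observation is that the separated diagram produced there is built purely out of the ideals $P_1,P_2$ acting on the module, so it is functorial: the modules $S_1 = S/P_2S$, $S_2 = S/P_1S$, and $\qnt S = S/(P_1S+P_2S)$ are all quotients of $S$ by submodules defined intrinsically, and likewise for $T$.

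First I would fix, for $S$ and for $T$, the canonical separated presentations of Lemma~\ref{lem:pullback-is-separated}, so $S_i = S/P_jS$ (with $\{i,j\}=\{1,2\}$), $\qnt S = S/(P_1S+P_2S)$, and similarly $T_i$, $\qnt T$. Then, since $f$ is $R$-linear, it maps $P_jS$ into $P_jT$ and $P_1S+P_2S$ into $P_1T+P_2T$; hence $f$ descends to $R_i$-linear maps $f_i\colon S_i\to T_i$ and an $\qnt R$-linear map $\bar f\colon\qnt S\to\qnt T$. These fit into the commuting cube whose front and back faces are the separated diagrams of $S$ and $T$ and whose connecting arrows are $f_1$, $\bar f$, $f_2$, with commutativity of every face immediate from the fact that all the maps are induced from $f$ and the quotient projections. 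Because $S$ is (canonically) the pullback of $\pb{S;p}$ and $T$ the pullback of $\pb{T;q}$, the map on pullbacks induced by $(f_1,\bar f,f_2)$ is a well-defined homomorphism $S\to T$, and I would check it agrees with $f$ under the identification $S\subset S_1\oplus S_2$: an element $s\in S$ is sent to $([s]_{S_1},[s]_{S_2})$, which $(f_1,f_2)$ carries to $([f(s)]_{T_1},[f(s)]_{T_2})$, i.e. to $f(s)$. Thus $f$ is the pullback of a commuting diagram of linear maps, so by the characterization preceding the proposition it is separated.

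The main obstacle, and the point deserving care, is verifying that $f$ is determined by its induced maps on these canonical quotients — equivalently, that no information is lost in passing $S\hookrightarrow S_1\oplus S_2$. This is exactly the injectivity statement $P_1S\cap P_2S=0$ established inside the proof of Lemma~\ref{lem:pullback-is-separated}, together with the description there of the pullback of $\pb{S;p}$ as the set of pairs with equal image in $\qnt S$; once those are in hand, the diagram chase is routine. A secondary point is to confirm that $f_i$ is genuinely $R_i$-linear and not merely additive: this follows because the $R_i$-action on $S_i$ was itself defined via the $R$-action on $S$ (an element $(r_1,r_2)\in R$ acts on $[s]\in S_i$ by $[(r_1,r_2)s]$), so $f_i([s]) = [f((r_1,r_2)s)] = [(r_1,r_2)f(s)] = (r_1,r_2)\cdot f_i([s])$. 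Granting the lemma, there is no real difficulty beyond careful bookkeeping of the three quotients on each side.
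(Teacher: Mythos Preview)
Your argument is correct and is, in a sense, the conceptual version of what the paper does. You exploit that the construction of Lemma~\ref{lem:pullback-is-separated} is intrinsic---$S_1=S/P_2S$, $S_2=S/P_1S$, $\qnt S=S/(P_1S+P_2S)$ depend only on the $R$-module $S$---and hence manifestly functorial in $S$; the induced maps on quotients give $(f_1,\bar f,f_2)$ for free, and the check that the resulting pullback map equals $f$ is the one-line computation you wrote.

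The paper takes a different, element-level route: it fixes \emph{arbitrary} separated diagrams for $S$ and $T$ (not necessarily the canonical quotient ones) and shows directly that the first coordinate $t_1$ of $f\pbit{s}$ depends only on $s_1$, by computing $f\pbit{0,0,r_2s_2}=\pbit{0,0,r_2t_2}$. This has a small payoff you do not get: it proves existence and uniqueness of $(f_1,f_2)$ relative to \emph{any} chosen separated presentations, which is exactly what is invoked in the Remark and in the subsequent uniqueness Theorem (where one separates $\id_S$ between two \emph{different} separated diagrams). Your proof establishes the proposition as stated, but if you later need to compare two distinct separated diagrams you would have to supplement it---e.g.\ by first identifying each with the canonical one.
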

\begin{proof}
	Choose a~homomorphism of separated modules $f\colon S\to T$, and separated diagrams
	for $S$ and $T$. Then
	\begin{equation}\label{eq:separate-f}
		\begin{split}
			f\pbit{0,0,r_2s_2} &= \pbit{0,0,r_2}f\pbit{s} \\
												 &= \pbit{0,0,r_2}\pbit{t} = \pbit{0,0,r_2t_2},
		\end{split}
	\end{equation}
	where $s_1$ is chosen so that $p_1(s_1) = p_2(s_2) =: \bar s$. Hence, we can define $f_1$ as follows.
	For $s_1\in S_1$ choose $s_2\in S_2$ such that $\pbit{s}\in S$ and compute $f\pbit{s} = \pbit{t}$.
	Due to \eqref{eq:separate-f} the~element $t_1$ is independent of the~choice of $s_2$, and we can set
	$f_1(s_1):=t_1$. Define $f_2$ in a~similar way.
\end{proof}

\begin{remark}
	According to the~proof of Proposition~\ref{prop:morphism-is-sep}, both $f_1$ and $f_2$ are uniquely
	determined by $f$ if one chooses separated presentations of $S$ and $T$. Hence, the~assignment
	$f\mapsto (f_1,\bar f, f_2)$ is functorial.
\end{remark}

\begin{theorem}
	Every pullback module $S$ has a~unique separated diagram $\pb{S;p}$.
\end{theorem}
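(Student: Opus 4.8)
The substance of the statement is uniqueness, since existence is already contained in Lemma~\ref{lem:pullback-is-separated}: a pullback module $S$ is by definition the $R$-submodule of $S_1\oplus S_2$ formed by the pairs with equal image in $\qnt S$, so that lemma produces the separated diagram $S/P_2S\to S/(P_1S+P_2S)\leftarrow S/P_1S$. The plan is therefore to show that \emph{any} separated diagram for $S$ is forced to coincide, up to canonical isomorphism, with this one.

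So I would start from an arbitrary separated diagram $\pb{S;p}$ and realize $S$ as the submodule of $S_1\oplus S_2$ consisting of pairs $(s_1,s_2)$ with $p_1(s_1)=p_2(s_2)$. First I would check that the coordinate projection $\pi_1\colon S\to S_1$ is surjective: given $s_1\in S_1$, surjectivity of $p_2$ yields $s_2\in S_2$ with $p_2(s_2)=p_1(s_1)$, and then $(s_1,s_2)\in S$; the projection $\pi_2\colon S\to S_2$ is surjective by the symmetric argument. Next I would compute $\ker\pi_1$: a pair $(s_1,s_2)$ lies in it precisely when $s_1=0$, which by the pullback condition means $s_2\in\ker p_2=P_2S_2$, and conversely — using surjectivity of $\pi_2$ — every element of $P_2S_2$ occurs as the second coordinate of such a pair, so $\ker\pi_1=0\oplus P_2S_2$. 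Unwinding how the ring projections $R\to R_i$ drive the $R$-action on $S_1\oplus S_2$ identifies this subgroup with $P_2S$. Hence $\pi_1$ induces a canonical isomorphism $S/P_2S\xrightarrow{\ \sim\ }S_1$, and symmetrically $S/P_1S\cong S_2$.

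To finish, I would push $\qnt S$ through as well: since $p_1$ is surjective with $\ker p_1=P_1S_1$, we get $\qnt S\cong S_1/P_1S_1$, and under the isomorphism $S_1\cong S/P_2S$ the submodule $P_1S_1$ corresponds to $(P_1S+P_2S)/P_2S$, so $\qnt S\cong S/(P_1S+P_2S)$; one then checks directly that each $p_i$ becomes the evident quotient map and that the resulting isomorphism of diagrams is compatible with the two identifications of $S$ as a pullback. Since the three modules appearing on the right-hand side, together with the maps among them, are built from $S$ alone, any two separated diagrams for $S$ are canonically isomorphic, which is the asserted uniqueness.

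The one step that needs genuine care is the identity $\ker\pi_1=P_2S$: the inclusion $0\oplus P_2S_2\subseteq\ker\pi_1$ is immediate, but the reverse inclusion — that a pair $(0,s_2)$ with $s_2\in P_2S_2$ really arises by letting an element of the ideal $P_2$ act on an element of $S$ — is exactly where surjectivity of $\pi_2$ and the bookkeeping of the $R$-module structure on $S_1\oplus S_2$ enter; everything else is routine diagram chasing. (Alternatively, uniqueness follows abstractly: by Proposition~\ref{prop:morphism-is-sep} the identity $\id_S$ is separated, so for any two chosen separated diagrams it induces morphisms of diagrams in both directions, and functoriality of $f\mapsto(f_1,\bar f,f_2)$ forces the two composites to be identity triples, whence both morphisms are isomorphisms.)
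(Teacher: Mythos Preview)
Your proof is correct. Your main line of argument differs from the paper's: you show directly that any separated diagram for $S$ is canonically isomorphic to the intrinsic one $S/P_2S\to S/(P_1S+P_2S)\leftarrow S/P_1S$ produced in the proof of Lemma~\ref{lem:pullback-is-separated}, by computing the kernels of the coordinate projections $\pi_i\colon S\to S_i$ and invoking the separated condition $\ker p_i=P_iS_i$. The paper instead argues exactly as in your closing parenthetical alternative: it applies Proposition~\ref{prop:morphism-is-sep} to separate $\id_S$ with respect to two chosen separated diagrams, obtaining maps $f_i\colon S_i\to T_i$ and $g_i\colon T_i\to S_i$ whose composites are identities by functoriality. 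Your direct computation is more explicit---it pins down the separated diagram as a specific canonical model and avoids appealing to Proposition~\ref{prop:morphism-is-sep}---whereas the paper's route is shorter once that proposition is in hand and makes the uniqueness-up-to-isomorphism transparent without any kernel chase.
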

\begin{proof}
	The~existence is guaranteed by Lemma~\ref{lem:pullback-is-separated}. Given another presentation
	$\pb{T;q}$ we can separate the~identity homomorphism $\id\colon S\to S$ by Proposition~\ref
	{prop:morphism-is-sep} into $R_i$-linear maps $f_i\colon S_i\to T_i$ and $g_i\colon T_i\to S_i$.
	One can easily verify that $f_ig_i = \id_{T_i}$ and $g_if_i = \id_{S_i}$, either from functoriality
	or the~construction of these maps, which shows $S_i$'s are unique up to an~isomorphism. Hence, so
	is $\bar S = S_i/P_iS_i$ and the~theorem follows.
\end{proof}

\section{Morphisms of pullback modules}\label{sec:morphisms}

In this section we will characterize basic properties of homomorphisms of pullback modules
in terms of maps between their pullback diagrams. Hereafter in this section a~module $M$
comes always with a~fixed pullback diagram
\begin{equation}
	\pb{M;p},
\end{equation}
where $M_i$ is an~$R_i$-module and $\qnt M$ is an~$\qnt R$-module, and a~morphism
$f\colon M\to N$ is given by a~triple $(f_1,\bar f,f_2)$ fitting into a~commuting
diagram
\begin{equation}
	\bfig
		\hsquares/->`<-`->`->`->`->`<-/<500,500,400>[%
			M_1`\qnt M`M_2`N_1`\qnt N`N_2;%
			p_1`p_2`f_1`\bar f`f_2`q_1`q_2%
		]
	\efig
\end{equation}

\begin{proposition}\label{prop:sep-monomorphism}
	A~homomorphism $f\colon M\to N$ of pullback modules is injective if and only if the~map
	$\mu\colon\ker f_1\oplus\ker f_2\to\qnt M$, $(m_1,m_2) \mapsto p_1(m_1)-p_2(m_2)$, is
	a~monomorphism.
\end{proposition}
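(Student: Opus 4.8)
Recall that $M$ is the pullback of $M_1\xrightarrow{p_1}\qnt M\xleftarrow{p_2}M_2$, so $M\subset M_1\oplus M_2$ consists of pairs $(m_1,m_2)$ with $p_1(m_1)=p_2(m_2)$, and $f(m_1,m_2)=(f_1(m_1),f_2(m_2))$. Hence $(m_1,m_2)\in\ker f$ precisely when $m_1\in\ker f_1$, $m_2\in\ker f_2$, and $p_1(m_1)=p_2(m_2)$. So the plan is to observe directly that $\ker f$ is carried isomorphically onto the set of pairs $(m_1,m_2)\in\ker f_1\oplus\ker f_2$ with $p_1(m_1)=p_2(m_2)$, which is exactly $\ker\mu$. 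Then injectivity of $f$ is equivalent to $\ker\mu=0$, i.e. to $\mu$ being a monomorphism. I would carry this out in the obvious direction first.

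**Key steps.** First I would fix the identification $M=\{(m_1,m_2):p_1(m_1)=p_2(m_2)\}$ coming from the pullback diagram, and likewise note $f$ acts coordinatewise. Second, I would write down the chain of equivalences: $(m_1,m_2)\in\ker f\iff f_1(m_1)=0$ and $f_2(m_2)=0$ and $p_1(m_1)=p_2(m_2)\iff (m_1,m_2)\in(\ker f_1\oplus\ker f_2)$ and $p_1(m_1)-p_2(m_2)=0\iff(m_1,m_2)\in\ker\mu$. Third, I would conclude: $f$ is injective $\iff\ker f=0\iff\ker\mu=0\iff\mu$ is a monomorphism. Each of these steps is a one-line verification, so the whole argument is short.

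**Anticipated obstacle.** There is essentially no deep obstacle here — the statement is a direct unwinding of the definition of a pullback. The only thing to be slightly careful about is making sure the two descriptions of $M$ (as an abstract pullback versus as the submodule $\{(m_1,m_2):p_1(m_1)=p_2(m_2)\}\subset M_1\oplus M_2$) are matched up correctly, and that $f$ really is given coordinatewise under this identification, which follows from the commuting diagram defining $f=(f_1,\bar f,f_2)$. Once that identification is in place, the equivalence $\ker f\cong\ker\mu$ is immediate, and the proposition follows.
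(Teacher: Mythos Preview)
Your proposal is correct and matches the paper's own argument: both amount to the observation that, under the identification $M\subset M_1\oplus M_2$, the subsets $\ker f$ and $\ker\mu$ of $M_1\oplus M_2$ coincide. The only cosmetic difference is that the paper spells out the two implications separately, whereas you package them as the single equality $\ker f=\ker\mu$; the content is identical.
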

\begin{proof}
	Assume $\mu$ is injective. If $f\pbit{m_1,\overline m,m_2}=0$, then $m_i\in\ker f_i$ and
	$\mu(m_1,m_2)=\overline m-\overline m = 0$. This shows $m_i=0$.

	Conversely, choose $m_i\in\ker f_i$ such that $\mu(m_1,m_2)=0$. This forces
	$p_1(m_1)=p_2(m_2)$, so that $(m_1,m_2)\in S$. However, $f\pbit{m_1,\overline m,m_2} = 0$
	implies $m_i=0$, which proves injectivity of $\mu$.
\end{proof}

\begin{corollary}\label{cor:mono-sep-cond}
	A~homomorphism of pullback modules $f\colon M\to N$ is a~monomorphism if and only if
	\begin{enumerate}
		\item $\ker f_i\cap \ker p_i = 0$, and
		\item $p_1(\ker f_1)\cap p_2(\ker f_2) = 0$.
	\end{enumerate}
	If $\pb{M;p}$ is a~separated diagram, the~first condition translates into
	$\ker f_i\cap P_iM_i=0$.
\end{corollary}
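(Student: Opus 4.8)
The plan is to unwind Corollary~\ref{cor:mono-sep-cond} from Proposition~\ref{prop:sep-monomorphism}, which already reduces injectivity of $f$ to injectivity of the map $\mu\colon\ker f_1\oplus\ker f_2\to\qnt M$ given by $(m_1,m_2)\mapsto p_1(m_1)-p_2(m_2)$. So the whole task is to show that $\mu$ is a monomorphism if and only if conditions (1) and (2) both hold. This is elementary linear algebra about a difference map out of a direct sum, so I expect no serious obstacle — the only thing to be careful about is separating the ``each summand injects'' part from the ``images meet trivially'' part correctly.

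First I would prove the forward direction: suppose $\mu$ is injective. To get (1), take $m_i\in\ker f_i\cap\ker p_i$; viewing it as an element of $\ker f_1\oplus\ker f_2$ with the other coordinate zero (using $p_j(0)=0$), we see $\mu$ kills it, so $m_i=0$. To get (2), take $\bar x\in p_1(\ker f_1)\cap p_2(\ker f_2)$, write $\bar x=p_1(m_1)=p_2(m_2)$ with $m_i\in\ker f_i$; then $\mu(m_1,m_2)=0$, so $(m_1,m_2)=0$ by injectivity, hence $\bar x=p_1(m_1)=0$.

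Conversely, assume (1) and (2). Take $(m_1,m_2)\in\ker\mu$, i.e. $m_i\in\ker f_i$ with $p_1(m_1)=p_2(m_2)$. Then this common value lies in $p_1(\ker f_1)\cap p_2(\ker f_2)$, which is $0$ by (2), so $p_1(m_1)=0=p_2(m_2)$. Thus $m_i\in\ker f_i\cap\ker p_i$, which is $0$ by (1), so $m_i=0$. Hence $\mu$ is injective, and by Proposition~\ref{prop:sep-monomorphism} this is equivalent to $f$ being a monomorphism.

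Finally, for the last sentence: if $\pb{M;p}$ is a separated diagram, then by definition $\ker p_i=P_iM_i$, so condition (1) reads $\ker f_i\cap P_iM_i=0$ verbatim. I would simply remark that substituting $\ker p_i=P_iM_i$ into (1) gives the stated reformulation; nothing further is needed.
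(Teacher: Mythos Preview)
Your proposal is correct and is exactly the argument the paper has in mind: the corollary is stated without proof, as an immediate unwinding of Proposition~\ref{prop:sep-monomorphism}, and what you wrote is precisely that unwinding---splitting injectivity of $\mu$ into injectivity on each summand (condition~(1)) and trivial intersection of the images (condition~(2)). The final remark about the separated case is likewise just the definitional substitution $\ker p_i=P_iM_i$, as you say.
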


Both the~kernel and the~pullback are categorical limits, so that the~kernel of a~separated
homomorphism $f$ is given by the~pullback of kernels of its components i.e. the~left column
below is a~pullback diagram of $\ker f$:
\begin{equation}
	\bfig
		\iiixiii|aaaaaalrrlrr|/->`->`->`->`->`->`->`->`->`<-`<-`<-/<500,400>[%
			\ker f_1`M_1`N_1`\ker\bar f`\qnt M`\qnt N`\ker f_2`M_2`N_2;%
			`f_1``\bar f``f_2`c_1`p_1`q_1`c_2`p_2`q_2%
		]
	\efig
\end{equation}
The~homomorphisms $c_i$ exist and are unique due to the~universal property of the~kernel.
However, they are usually not surjective, so the~left column is seldom a~separated
diagram for $\ker f$.

Assume now $N$ is given by a~preseparated diagram, i.e. a~pullback of epimorphisms. If
$f\colon M\to N$ is surjective, so must be each $f_i\colon M_i\to N_i$ (because $N$ is
a~subdirect sum in $N_1\oplus N_2$). The converse does not hold in general. The~proposition
below is not the~weakest possible statement, but it shows what difficulties may occur.
One condition that obviously can be weakened is the~surjectivity of $f_i$ or $\bar f$:
all we use in the~proof below is that their images contain certain submodules of $N_i$
or $\bar N$.

\begin{proposition}\label{prop:sep-epimorphism}
	A~homomorphism $f\colon M\to N$ is an~epimorphism
	if any of the~conditions below holds:
	\begin{enumerate}
	\item both $f_1, f_2$ and one of $c_i\colon\ker f_i\to\ker\bar f$ are epimorphisms, or
	\item $f_1$ is an~epimorphism and the~canonical map from $M_2$ to the~pullback of
				$\pb{\qnt M,\qnt N,N_2}$ is surjective, or
	\item $f_2$ is an~epimorphism and the~canonical map from $M_1$ to the~pullback of
				$\pb{\qnt M,\qnt N,N_1}$ is surjective, or
	\item $\bar f$ is an~epimorphism and the~canonical map from $M_i$ to the~pullback of
				$\pb{\qnt M,\qnt N,N_i}$ is surjective, where $i=1,2$.
	\end{enumerate}
\end{proposition}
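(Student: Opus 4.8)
The plan is to reduce all four cases to a single elementwise surjectivity argument. Fix $\pbit{n_1,\bar n,n_2}\in N$; I must produce a preimage in $M$, i.e.\ elements $m_i\in M_i$ with $f_i(m_i)=n_i$, $\bar f$-image $\bar n$, and matching projections $p_i(m_i)=\bar m$ for a common $\bar m\in\qnt M$. The key observation, already flagged in the paragraph preceding the statement, is that we never need full surjectivity of the maps: we only need their images to hit the relevant elements, and we exploit the freedom to adjust a chosen preimage by an element of the kernel of the appropriate component map.

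First I would treat case~(2), which is the cleanest. Since $f_1$ is onto, pick $m_1\in M_1$ with $f_1(m_1)=n_1$; set $\bar m := p_1(m_1)$. Then $\bar f(\bar m)=q_1(f_1(m_1))=q_1(n_1)=\bar n$, so the pair $(\bar m,n_2)$ lies in the pullback $\pb{\qnt M,\qnt N,N_2}$ (it is compatible under $\bar f$ and $q_2$ because $(n_1,\bar n,n_2)\in N$ gives $q_2(n_2)=\bar n=\bar f(\bar m)$). By hypothesis the canonical map $M_2\to\pb{\qnt M,\qnt N,N_2}$ is surjective, so choose $m_2\in M_2$ with $p_2(m_2)=\bar m$ and $f_2(m_2)=n_2$. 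Now $(m_1,\bar m,m_2)$ is an element of $M$ (the projections agree) mapping to $\pbit{n_1,\bar n,n_2}$. Case~(3) is identical with the indices $1$ and $2$ swapped, and case~(4) is the same argument starting from $\bar f$ surjective: lift $\bar n$ to some $\bar m\in\qnt M$, then for the chosen $i$ the pair $(\bar m,n_i)$ lies in $\pb{\qnt M,\qnt N,N_i}$, lift it to $m_i\in M_i$, and finally lift $n_j$ (the other index) through $f_j$ after adjusting by $\ker f_j$ so the projection matches $\bar m$ — but in fact the cleanest route for case~(4) is to note it is literally subsumed by cases (2) or (3) once one checks that surjectivity of $\bar f$ together with the pullback condition on $M_i$ forces $f_i$ to be onto, so I would state it that way.

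Case~(1) is the one requiring the kernel-chasing trick. Given $\pbit{n_1,\bar n,n_2}\in N$, use surjectivity of $f_1$ and $f_2$ to pick $m_1'\in M_1$, $m_2'\in M_2$ with $f_i(m_i')=n_i$. These need not have matching projections: set $\bar a := p_1(m_1') - p_2(m_2')\in\qnt M$. Applying $\bar f$ gives $\bar f(\bar a) = q_1(n_1) - q_2(n_2) = \bar n - \bar n = 0$, so $\bar a\in\ker\bar f$. Now invoke surjectivity of, say, $c_1\colon\ker f_1\to\ker\bar f$: choose $k_1\in\ker f_1$ with $p_1(k_1)=\bar a$ (unwinding the definition of $c_1$ as the restriction of $p_1$). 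Replace $m_1'$ by $m_1 := m_1' - k_1$; this does not change $f_1(m_1)=n_1$, and now $p_1(m_1) = p_1(m_1') - \bar a = p_2(m_2')$, so with $m_2 := m_2'$ and $\bar m := p_2(m_2)$ the triple $(m_1,\bar m,m_2)$ lies in $M$ and maps to the given element. If instead $c_2$ is the surjection one adjusts $m_2'$ symmetrically.

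The main obstacle is purely bookkeeping: in each case one must carefully verify that the ``compatible pair'' one feeds into the canonical map $M_i\to\pb{\qnt M,\qnt N,N_i}$ genuinely satisfies the pullback compatibility condition, which comes down to the defining relation $q_1(n_1)=\bar n=q_2(n_2)$ for the chosen element of $N$ together with the commutativity of the two squares in the diagram. I would isolate this compatibility check once and reuse it, rather than repeating it four times. No deep input is needed beyond the elementary description of pullbacks of abelian groups and the freedom to modify lifts by kernel elements.
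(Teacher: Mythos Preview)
Your argument is correct and, for cases (1)--(3), matches the paper's proof essentially verbatim (the paper's diagram chase for (1) is exactly your subtract-an-element-of-$\ker f_i$ trick). For case~(4) the paper is more direct than your reduction: after lifting $\bar n$ to some $\bar m$ via surjectivity of $\bar f$, it simply invokes the pullback hypothesis for \emph{both} $i=1$ and $i=2$ to obtain $m_i\in M_i$ with $p_i(m_i)=\bar m$ and $f_i(m_i)=n_i$ in one stroke --- no kernel adjustment, no appeal to earlier cases. Your reduction of (4) to (2) or (3) is valid, but note it genuinely uses the hypothesis for both indices (one to force $f_i$ onto, the other as the surviving pullback condition); your abandoned first attempt --- lift $n_j$ through $f_j$ and then adjust by $\ker f_j$ --- would not have worked as written, since $f_j$ is not assumed surjective in condition~(4).
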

\begin{proof}
	Pick an element $\pbit{n}\in N$. Surjectivity under condition (1) is proved by an~easy
	diagram chase: first for $i=1,2$ find $m_i\in M_i$ such that $f_i(m_i)=n_i$ and then
	use surjectivity of $c_1$ or $c_2$ to modify one of these elements, so that they project
	on the~same $\overline m$. More precisely, if $c_i$ is surjective, add to $m_i$ an~element
	from $c_i^{-1}(p_1(m_1)-p_2(m_2))$.

	For (2), due to surjectivity of $f_1$ we have $n_1=f_1(m_1)$ for some $m_1\in M_1$. Then
	$(p_1(m_1),n_2)$ is an~element of the~pullback of $\pb{\qnt M,\qnt N,N_2}$, and as such
	it comes from some $m_2\in M_2$. Then $\pbit{m_1,\overline m,m_2}$ in an~element of $M$
	sent by $f$ to $\pbit{n}$. In a~similar way we prove (3).

	For last condition, pick $\bar m$ that covers $\bar n$. Then we have elements $m_i\in M_i$,
	$i=1,2$, such that $p_i(m_i)=\overline m$, and $f_i(m_i)=n_i$.
\end{proof}

\begin{corollary}
	Suppose there is a~diagram with exact rows,
	\begin{equation}
		\bfig
			\iiixiii|aaaaaalrrlrr|/->`->`->`->`->`->`->`->`->`<-`<-`<-/<500,400>{63}[%
				K_1`M_1`N_1`\qnt K`\qnt M`\qnt N`K_2`M_2`N_2;```````````%
		]
		\efig
	\end{equation}
	where the vertical maps in the~first column are epimorphisms. Then the induced sequence
	of $R$-modules $0\longrightarrow K\longrightarrow M\longrightarrow N\longrightarrow 0$
	is exact.
\end{corollary}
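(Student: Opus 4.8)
The plan is to check exactness of $0\longrightarrow K\longrightarrow M\longrightarrow N\longrightarrow 0$ separately at the three spots, exploiting that $K$, $M$ and $N$ are the pullbacks of the three columns of the given $3\times 3$ diagram. First I would name the arrows: by the universal property of the pullback $N$, the maps $M_i\to N_i$ and $\qnt M\to\qnt N$ of the diagram assemble into a morphism $f\colon M\to N$, and likewise the inclusions $K_i\hookrightarrow M_i$ and $\qnt K\hookrightarrow\qnt M$ assemble into a morphism $g\colon K\to M$. Since the rows are exact, $\ker f_i=K_i$, $\ker\bar f=\qnt K$, the first-column maps $c_i\colon K_i\to\qnt K$ are the restrictions of $p_i\colon M_i\to\qnt M$, and $\qnt K\to\qnt M$ is injective.

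For exactness at $K$ and at $M$ I would invoke the description of kernels recalled just before Proposition~\ref{prop:sep-epimorphism}: the kernel of $f$ is the pullback of the kernels of its components with the induced connecting maps, that is, the pullback of $\pb{K_1,\qnt K,K_2}$ with connecting maps $c_1,c_2$. But that pullback is precisely $K$, and the canonical inclusion $\ker f\hookrightarrow M$ is precisely $g$; matching the two pullbacks inside $M$ uses injectivity of $\qnt K\to\qnt M$. Hence $g$ is a monomorphism with $\im g=\ker f$, which gives exactness at $K$ and at $M$. (Equivalently, this is just left exactness of the pullback functor applied to the exact rows $0\to K_i\to M_i\to N_i$.)

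It then remains to prove that $f$ is an epimorphism. By exactness of the rows, $f_1$ and $f_2$ are epimorphisms; by hypothesis the first-column maps $c_1,c_2\colon K_i\to\qnt K=\ker\bar f$ are epimorphisms as well. This is exactly condition~(1) of Proposition~\ref{prop:sep-epimorphism}, so $f$ is surjective and the sequence is exact. The only step that needs genuine care is the bookkeeping in the previous paragraph -- identifying $\ker f$ with the pullback module $K$ and the connecting maps appearing there with the given maps $c_i$ -- and, as the computation shows, the epimorphism hypothesis on the first column is used only for surjectivity of $f$, left exactness being automatic.
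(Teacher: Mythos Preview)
Your argument is correct and is essentially the paper's own proof, just with the details spelled out: left exactness of the pullback functor gives exactness at $K$ and $M$, and condition~(1) of Proposition~\ref{prop:sep-epimorphism} gives surjectivity of $M\to N$. The extra bookkeeping you do (identifying $\ker f$ with the pullback $K$ via the induced maps $c_i$) is exactly what the phrase ``the pullback functor is left exact'' unpacks to.
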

\begin{proof}
	The~pullback functor is a~limit, and as such it is left exact. The~additional condition
	on the~first column guarantees that $M\longrightarrow N$ is surjective.
\end{proof}

\section{Separated presentations and \texorpdfstring{$R$}{R}-diagrams}\label{sec:general-modules}

\begin{definition}
	A~homomorphism $f\colon K\to M$ of pullback modules is called a~\emph{pullback presentation}
	of a~module $M$, if it is a~pullback of a~commuting diagram
	\begin{equation}\label{diag:separated-presentation}
		\bfig
			\hsquares|aalmraa|/->`<-`->`->`->`->`<-/<500,500,400>[%
				K_1`\qnt K`K_2`S_1`\qnt S`S_2;%
				q_1`q_2`f_1`\bar f`f_2`p_1`p_2%
			]
		\efig
	\end{equation}
	and $M=\coker f$. We say $f$ is a~\emph{separated presentation}\footnote{
		This is different from a~separated \emph{representation} defined in \cite{Mixed ZG modules}
		as an~epimorphisms $S\to M$, minimal in some sense, where $S$ is separated.
	} of $M$, if both $K$ and $S$ are separated.
\end{definition}

In the~view of Proposition~\ref{prop:morphism-is-sep} every homomorphisms $f\colon K\to S$ of
separated modules is induced by a~unique diagram of the~form \eqref{diag:separated-presentation}.
In particular, one has $\ker q_i=P_iK_i$ and $\ker p_i=P_iS_i$. Furthermore, due to Corollary~%
\ref{cor:separated-submodule} we can assume $f\colon K\to S$ is injective.

\begin{proposition}\label{prop:sep-prep-exists}
	Every $R$-module $M$ has a~separated presentation.
\end{proposition}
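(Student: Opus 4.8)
The plan is to build a separated presentation by hand, starting from an arbitrary presentation of $M$ as a cokernel of free modules, and then correcting it so that both the source and target of the map become separated. First I would pick a free presentation $F' \xrightarrow{\phi} F \to M \to 0$ over $R$, with $F$ and $F'$ finitely generated free $R$-modules (or free of arbitrary rank if $M$ is not finitely generated — the argument is the same). Since a free $R$-module is a submodule of a free $R_1\oplus R_2$-module, Lemma~\ref{lem:pullback-is-separated} applies and both $F$ and $F'$ are separated; moreover, by Proposition~\ref{prop:morphism-is-sep} the map $\phi$ is separated, so it is induced by a diagram of the form \eqref{diag:separated-presentation} with $K = F'$, $S = F$. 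This already gives $M = \coker\phi$ as a cokernel of a separated homomorphism between separated modules.

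The one remaining requirement is that the presentation map be injective, so that it is genuinely a \emph{presentation} in the sense used in this section. Here I would invoke the discussion immediately preceding the proposition: by Corollary~\ref{cor:separated-submodule} the image $\im\phi \subseteq F$ is again separated, and we may replace $K = F'$ by $\im\phi$, with the presentation map now being the inclusion $\im\phi \hookrightarrow F$. Since a submodule of a separated module is separated, $\im\phi$ carries a (unique) separated diagram $\pb{K;q}$, and the inclusion into $F$, being a map of separated modules, is automatically separated by Proposition~\ref{prop:morphism-is-sep}, hence induced by a commuting diagram \eqref{diag:separated-presentation}. Thus $\coker(\im\phi \hookrightarrow F) = F/\im\phi = M$, and we have produced a separated presentation of $M$.

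The main point to be careful about is the very first step: exhibiting $F$ and $F'$ as separated modules. This is not automatic from the abstract definition of a pullback module, but it follows from Lemma~\ref{lem:pullback-is-separated} once one observes that $R$ itself sits inside $R_1\oplus R_2$ as a subdirect sum, so $R^n$ sits inside $(R_1\oplus R_2)^n = R_1^n \oplus R_2^n$ as an $R$-submodule; with $T_i := R_i^n$ the lemma gives the desired separated diagram. Everything after that is formal, relying only on results already in the excerpt (Proposition~\ref{prop:morphism-is-sep} for separability of the relevant maps, and Corollary~\ref{cor:separated-submodule} for separability of the image). No delicate calculation is involved; the content is entirely in assembling the right objects.
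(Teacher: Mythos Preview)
Your proposal is correct and follows essentially the same idea as the paper: every module has a free presentation, and free $R$-modules are separated because $R$ sits inside $R_1\oplus R_2$. The paper's proof is the one-line observation that $R$ (hence every free $R$-module) is separated; your additional step of passing to $\im\phi$ to enforce injectivity is unnecessary, since the definition of a separated presentation does not require $f$ to be injective---the paper only remarks afterward that one \emph{may} assume this by Corollary~\ref{cor:separated-submodule}.
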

\begin{proof}
	The~ring $R$, regarded as an~$R$-module, is separated, so is every free $R$-module.
\end{proof}

\noindent
More can be shown if $\qnt R$ is semisimple.

\begin{proposition}\label{prop:R-diagrams}
	Assume that $\qnt R$ is semisimple. Then every $R$-module $M$ has a~separated presentation
	$f\colon K\to S$ such that
	\begin{enumerate}
		\item $K$ is the~pullback of $\pb{K,K,K;\id,\id}$, and
		\item both $f_1$ and $f_2$ are monomorphisms, and $\bar f = 0$.
	\end{enumerate}
\end{proposition}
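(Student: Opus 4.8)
The plan is to start from the free separated presentation guaranteed by Proposition~\ref{prop:sep-prep-exists} and then repair it in stages. First I would take a free $R$-module $S$ surjecting onto $M$, with kernel $K$; by Corollary~\ref{cor:separated-submodule} the submodule $K$ is separated, so $f\colon K\hookrightarrow S$ is a separated presentation, and by Proposition~\ref{prop:morphism-is-sep} it comes from a diagram of the form \eqref{diag:separated-presentation} with $\ker q_i=P_iK_i$ and $\ker p_i=P_iS_i$. The maps $f_1,f_2$ are already injective (since $f$ is and the diagram computes a pullback of injections), so the real work is to arrange $\bar f=0$ while keeping condition~(1), i.e. $\qnt K=K_1=K_2$ with $q_i=\id$.

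The key observation is that $\bar f\colon\qnt K\to\qnt S$ is an $\qnt R$-linear map between $\qnt R$-modules, and $\qnt R$ is semisimple, so $\bar f$ splits: $\qnt K\cong\ker\bar f\oplus\qnt K'$ where $\qnt K'$ maps isomorphically onto $\im\bar f\subseteq\qnt S$. The idea is to absorb the part of $K$ lying over $\qnt K'$ into $S$. Concretely, enlarge $S$ by adding a free summand that kills $\im\bar f$; more precisely, I would replace $S$ by $S\oplus F$ for a suitable free (hence separated) $R$-module $F$ and modify $f$ so that the new $\bar f$ becomes zero without changing the cokernel $M$. Because $\im\bar f$ lifts along $p_i$ (each $p_i$ is surjective), one can realize $\im\bar f$ as the image of a separated module mapping into $S$, and quotienting $S$ by that image — or equivalently replacing the presentation by a stably equivalent one — does not alter $M=\coker f$. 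Dually, the kernel-part $\ker\bar f$ of $\qnt K$ forces $K$ to be (after this modification) a pullback over an identity cospan: once $\bar f=0$, the pullback description $\pb{K;q}$ has $\qnt K$ replaced by $\ker\bar f$, and the semisimplicity again lets us further split so that $q_1=q_2=\id$, giving condition~(1). Throughout, every module produced is a submodule of a free $R$-module or a pullback of a diagram of $\qnt R$-modules and $R_i$-modules, hence separated by Lemma~\ref{lem:pullback-is-separated} and Corollary~\ref{cor:separated-submodule}.

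The step I expect to be the main obstacle is showing that modifying $S$ (and $f$) to force $\bar f=0$ does not change the cokernel $M$, and simultaneously keeps both $f_i$ injective and $K$ of the required pullback-of-identities form. This is a bookkeeping problem: one must track how enlarging $S_1,\qnt S,S_2$ and correspondingly $K_1,\qnt K,K_2$ interacts with the pullback conditions $\ker p_i=P_iS_i$, $\ker q_i=P_iK_i$, and the injectivity of the $f_i$. I would handle it by arguing at the level of the cospan diagrams: adjoining a summand $\qnt S\oplus\im\bar f$ to $\qnt S$ with the identity on the new factor, and $S_i\oplus(\text{lift of }\im\bar f)$ to $S_i$, then composing $f_i$ with the inclusion followed by a shear that cancels $\bar f$ — the shear exists precisely because $p_i$ is surjective and $\qnt R$ is semisimple, so $\im\bar f$ lifts to a complemented submodule of $S_i$. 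After this, $\bar f=0$ and $\qnt K$ may be taken to be $\ker\bar f$; a final application of semisimplicity to split $q_i\colon\qnt K\to S_i$ off its image and relabel yields $\qnt K=K_1=K_2$ with $q_i=\id$, completing the construction.
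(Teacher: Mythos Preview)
Your plan has a genuine gap at the outset: the assertion that $f_1$ and $f_2$ are already injective because $f$ is injective is false. Corollary~\ref{cor:mono-sep-cond} only says that injectivity of $f$ is equivalent to $\ker f_i\cap\ker q_i=0$ together with $q_1(\ker f_1)\cap q_2(\ker f_2)=0$; it does not force $\ker f_i=0$. Concretely, for the $p$-pullback ring take $S=R$ and $K=P_1+P_2\subset R$. Then the separated diagram of $K$ (built as in Lemma~\ref{lem:pullback-is-separated}) has $K_1=K/P_2K\cong\Z\oplus\Z_p$ while $S_1\cong\Z$, and $f_1$ kills the torsion summand. The paper's Example~\ref{ex:reduction-to-monos} exists precisely to fix this: one must pass to a further quotient to make the $f_i$ injective, and this step is not vacuous.

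The remainder of your proposal also diverges from the paper in a way that creates difficulties. You propose to \emph{enlarge} $S$ by free summands and perform shears, but it is not clear this keeps $\bar f=0$ and the $f_i$ injective simultaneously, and the final step (``split $q_i\colon\qnt K\to S_i$ off its image'') confuses the direction of the maps: $q_i$ goes $K_i\to\qnt K$, not $\qnt K\to S_i$, and semisimplicity of $\qnt R$ gives no leverage on the $R_i$-modules $K_i$. The paper instead \emph{quotients}: it passes from $f\colon K\to S$ to $K/L\to S/f(L)$ for a carefully chosen $L$, using Lemmas~\ref{lem:exactness} and~\ref{lem:sep-pres-reduce} and Corollary~\ref{cor:sep-pres-red-two} to check that separatedness and the cokernel are preserved. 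The three reductions (Examples~\ref{ex:reduction-of-K}, \ref{ex:reduction-of-bar-f}, \ref{ex:reduction-to-monos}) first make $K_1=\qnt K=K_2$, then kill $\bar f$ via the semisimple splitting $\qnt K=\ker\bar f\oplus U$, and only \emph{then} force $f_1,f_2$ to be injective.
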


\noindent
A~presentation of this type is called an~\emph{$R$-diagram} \cite{Mixed ZG modules} and is
written as a~diagram of four morphisms
\begin{equation}
	\bfig
		\Atrianglepair/->``->`->>`<<-/<500,300>[K`S_1`\qnt S`S_2;q_1``q_2`p_1`p_2]
	\efig
\end{equation}
Existence of such a~presentation was first proven in \cite{Mixed ZG modules} under the~assumption
that both $R_i$ are Dedekind domains and that $\qnt R$ is a~field. Our goal for this section is to
describe an~algorithm computing an~$R$-diagram from a~given separated presentation of $M$, proving
Proposition \ref{prop:R-diagrams}. We begin with the~following technical lemma.

\begin{lemma}\label{lem:exactness}
	Suppose there is a~commutative diagram with exact rows
	\begin{equation}
		\bfig
			\iiixiii|aaaaaallllll|/->`->`->`->`->`->`->`->`->`<-`<-`<-/<500,400>{23}[%
					L_1`M_1`N_1`\qnt L`\qnt M`\qnt N`L_2`M_2`N_2;%
					f_1`g_1`\bar f`\bar g`f_2`g_2`c_1```c_2``%
			]
		\efig
	\end{equation}
	where the maps $c_i$ are epimorphisms. Then the induced sequence
	$L\stackrel{f}\longrightarrow M\stackrel{g}\longrightarrow N\longrightarrow 0$
	is exact. Moreover, if $\pb{M}$ is separated, so is $\pb{N}$.
\end{lemma}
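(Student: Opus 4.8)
The plan is to prove the two assertions of Lemma~\ref{lem:exactness} separately, since the second builds on the first. For the first part, the key point is that $R$-modules given as pullbacks of the three abelian-group diagrams $\pb{L}$, $\pb{M}$, $\pb{N}$ sit inside the direct sums $L_1\oplus L_2$, etc., and the induced maps $f,g$ act componentwise. Thus exactness of the sequence $L\to M\to N\to 0$ can almost be checked entry by entry. Left exactness at $L$ and exactness at $M$ follow from the fact that the pullback functor is a limit, hence left exact (as already noted in the proof of the Corollary after Proposition~\ref{prop:sep-epimorphism}): the pullback of the two exact sequences $L_1\to M_1\to N_1$ and $L_2\to M_2\to N_2$ over the exact sequence $\qnt L\to\qnt M\to\qnt N$ computes the kernel/cokernel data correctly on the nose. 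For surjectivity of $g\colon M\to N$, I would invoke Proposition~\ref{prop:sep-epimorphism}(1): both $g_1,g_2$ are epimorphisms by hypothesis (the top and bottom rows are exact), and the hypothesis that the maps $c_i\colon L_i\to\qnt L$ — wait, here the $c_i$ are the maps $M_i\to\qnt M$? No: re-reading, the $c_i$ in this lemma are the vertical maps in the first column, $c_i\colon L_i\to\qnt L$. Actually the relevant epimorphism condition for Proposition~\ref{prop:sep-epimorphism}(1) concerns $\ker g_i\to\ker\bar g$; I would show that surjectivity of the $c_i\colon L_i\to\qnt L$ forces $\ker g_i\to\ker\bar g$ to be surjective, because $\ker g_i$ is the image of $f_i$ (exactness at $M_i$) and $\ker\bar g$ is the image of $\bar f$, and the square relating $f_i$, $\bar f$, $c_i$ commutes with $c_i$ onto; a short diagram chase lifts any element of $\ker\bar g=\im\bar f$ through $L_i$ and then through $f_i$.

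For the second assertion, suppose $\pb{M}$ is separated, so $p_i\colon M_i\to\qnt M$ is surjective with $\ker p_i=P_iM_i$. I want to conclude the same for $\pb{N}$, i.e. $\bar q_i\colon N_i\to\qnt N$ is surjective with $\ker\bar q_i=P_iN_i$ (writing $\bar q_i$ for the structure maps of $\pb{N}$). Surjectivity of $\bar q_i$ is immediate: it factors $M_i\twoheadrightarrow N_i$ after $M_i\twoheadrightarrow\qnt M\twoheadrightarrow\qnt N$, or more directly, $\bar q_i\circ g_i=\bar g\circ p_i$ is surjective since both $\bar g$ and $p_i$ are, hence $\bar q_i$ is surjective. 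The content is the identification $\ker\bar q_i=P_iN_i$. The inclusion $P_iN_i\subseteq\ker\bar q_i$ is formal: $P_i=(\ker v_i,0)$ (or $(0,\ker v_i)$) acts as zero on any $\qnt R$-module, in particular on $\qnt N$, and $\bar q_i$ is $R_i$-linear for the $R_i$-structure coming from $v_i$, so $P_i$ kills the image. For the reverse inclusion, take $n_i\in\ker\bar q_i$; lift it to $m_i\in M_i$ via the surjection $g_i$. Then $p_i(m_i)\in\ker\bar g$ because $\bar g(p_i(m_i))=\bar q_i(g_i(m_i))=\bar q_i(n_i)=0$. By exactness of the bottom-quotient row $\qnt L\xrightarrow{\bar f}\qnt M\xrightarrow{\bar g}\qnt N$, there is $\bar\ell\in\qnt L$ with $\bar f(\bar\ell)=p_i(m_i)$; by surjectivity of $c_i\colon L_i\to\qnt L$, lift $\bar\ell$ to $\ell_i\in L_i$. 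Now $m_i-f_i(\ell_i)\in\ker p_i=P_iM_i$ (using separatedness of $\pb{M}$), and applying $g_i$ — which kills $\im f_i$ by exactness at $M_i$ — gives $n_i=g_i(m_i)=g_i(m_i-f_i(\ell_i))\in g_i(P_iM_i)\subseteq P_iN_i$ since $g_i$ is $R_i$-linear. This establishes $\ker\bar q_i\subseteq P_iN_i$, completing the proof.

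The main obstacle I anticipate is keeping the bookkeeping straight between the three parallel "layers" ($R_1$-side, $R_2$-side, and $\qnt R$-quotient) and, in particular, correctly interpreting which vertical maps the hypothesis "the maps $c_i$ are epimorphisms" refers to — whether the $c_i$ are the first-column verticals $L_i\to\qnt L$ or the induced maps on kernels as in the discussion following Proposition~\ref{prop:sep-epimorphism}. Matching the hypothesis to the exact form of Proposition~\ref{prop:sep-epimorphism} needed for surjectivity of $g$ is the one genuinely delicate point; once that is pinned down, every remaining step is a routine diagram chase using only left-exactness of pullbacks, the explicit description $P_i=(\ker v_i,0)$, and the separatedness hypothesis $\ker p_i=P_iM_i$.
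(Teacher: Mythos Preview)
Your proposal is correct and follows essentially the same approach as the paper. The only differences are cosmetic: for exactness at $M$ the paper does an explicit element chase (using that $\bar f$ is a monomorphism, which your appeal to left-exactness also tacitly needs), while for the separated claim the paper simply invokes the Snake Lemma to get $\ker(N_i\to\qnt N)=g_i(\ker(M_i\to\qnt M))=g_i(P_iM_i)=P_iN_i$, which is exactly the diagram chase you wrote out by hand. Your reading of the hypothesis---that the $c_i$ are the first-column maps $L_i\to\qnt L$, and that their surjectivity feeds into Proposition~\ref{prop:sep-epimorphism}(1) via $\ker g_i=\im f_i\twoheadrightarrow\im\bar f=\ker\bar g$---is correct.
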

\begin{proof}
	The~surjectivity of $c_i$ implies $g$ is an~epimorphism. Obviously, $g\circ f = 0$ and to 
	show that $\ker g=\im f$, pick any element $\pbit{m_1,\overline m,m_2}$ from $\ker g$. By
	the~exactness of rows $m_i=f_i(k_i)$ for some $k_i\in L_i$, and $\overline m=\bar f(\bar k)$
	for some $\bar k\in\qnt L$. Since $\bar f$ is a~monomorphism and $\bar f(c_i(k_i)) =
	\overline m = \bar f(\bar k)$, the~triple $\pbit{k}$ is an~element of $L$ and $f(k) = m$.

	For the~last statement notice that the~Snake Lemma implies
	\begin{equation}
		\ker(N_i\to\qnt N) = g_i(\ker(M_i\to\qnt M)) = g_i(P_iM_i) = P_iN_i,
	\end{equation}
	and the~surjectivity of $M_i\to\qnt M$ implies $N_i\to\qnt N$ are surjective as well.
\end{proof}

The meaning of this lemma is that some quotients of separated modules are still separated.
This allows us sometimes to reduce a~given separated presentation of a~module $M$. Hereafter
fix a~pullback presentation $f\colon K\to S$ of a~module $M$.

\begin{lemma}\label{lem:sep-pres-reduce}
	Let $L=(\pb{L;u})$ be a~submodule of $K$ such that each $u_i$ is surjective and $\bar
	f|_{\bar L}$ is injective. Then $K/L \to S/f(L)$ is a~pullback presentation of $M$.
	Furthermore, if $K$ and $S$ are separated, so are $K/L$ and $S/f(L)$.
\end{lemma}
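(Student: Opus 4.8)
The plan is to produce the pullback presentation of $M$ on the nose from the given one by forming quotients in each of the three ``columns'' of the defining diagram \eqref{diag:separated-presentation}, and then to invoke Lemma~\ref{lem:exactness} twice — once to see that $K/L$ is again a pullback module with the expected components, and once to see that $S/f(L)$ is — after which the statement about $M$ and separatedness falls out. Concretely, since $L=(\pb{L;u})$ is a submodule of $K$, the separated homomorphism $L\hookrightarrow K$ is induced by a triple $(L_i\to K_i,\ \bar L\to\bar K)$ (Proposition~\ref{prop:morphism-is-sep}); but these component maps need not be injective, so the first thing I would pin down is exactly what $K/L$ looks like as a pullback module. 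Set $\widehat K_i:=K_i/\im(L_i\to K_i)$ and $\widehat{\qnt K}:=\qnt K/\im(\bar L\to\qnt K)$. The hypothesis that each $u_i$ is surjective means the map $L_i\to\qnt L$ is onto, hence $\im(\bar L\to\qnt K)=\im(L_i\to K_i\to\qnt K)$, so the induced maps $\widehat K_i\to\widehat{\qnt K}$ are well defined; applying Lemma~\ref{lem:exactness} to the diagram whose rows are $L_i\to K_i\to\widehat K_i\to 0$ (the $c_i$ there being the surjections $L_i\to\qnt L$, which are epi by hypothesis) identifies $K/L$ with the pullback of $\pb{\widehat K;\ }$ and, when $K$ is separated, shows this diagram is separated.

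Next I would do the same on the target side. Because $f\colon K\to S$ is separated it is induced by $(f_1,\bar f,f_2)$, and $f(L)\subset S$ is the image of the composite $L\to K\to S$, so its component modules are $f_i(\im(L_i\to K_i))$ and $\bar f(\im(\bar L\to\qnt K))$. Thus $\widehat S_i:=S_i/f_i(\im(L_i\to K_i))$ and $\widehat{\qnt S}:=\qnt S/\bar f(\im(\bar L\to\qnt K))$, the natural maps $\widehat S_i\to\widehat{\qnt S}$ being induced by the $p_i$. Again the relevant rows $\im(L_i\to K_i)\to S_i\to\widehat S_i\to 0$ have surjective connecting maps $\im(L_i\to K_i)\to\im(\bar L\to\qnt K)$ — here I use surjectivity of $u_i$ again, via $L_i\to\qnt L\to\im(\bar L\to\qnt K)$ — so Lemma~\ref{lem:exactness} identifies $S/f(L)$ with the pullback of $\pb{\widehat S;\ }$, separated whenever $S$ is. The induced triple $(\widehat f_1,\widehat{\bar f},\widehat f_2)$ between these two pullback diagrams is exactly the map $K/L\to S/f(L)$ by the functoriality of $f\mapsto(f_1,\bar f,f_2)$ noted in the Remark after Proposition~\ref{prop:morphism-is-sep}.

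It remains to check that $\coker(K/L\to S/f(L))=M$, i.e. $\coker f=\coker(\bar f)$ after quotienting by $f(L)$. This is a purely formal cokernel computation: from $L\hookrightarrow K\xrightarrow{f} S$ one gets, via the Snake Lemma or directly, $S/f(L)\big/\im(K/L\to S/f(L))\cong S/f(K)=\coker f=M$, using that $L\hookrightarrow K$ is injective so that $f(K)/f(L)$ is the image of $K/L$. I would spell this out by noting $\im(K/L\to S/f(L))=f(K)/f(L)$ and then $\bigl(S/f(L)\bigr)\big/\bigl(f(K)/f(L)\bigr)=S/f(K)$. Combined with the two applications of Lemma~\ref{lem:exactness} above, this gives both assertions. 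The only real subtlety — the step I expect to be the main obstacle — is the repeated bookkeeping of ``where $\bar L$ lands'': making sure that $\im(\bar L\to\qnt K)$, $\im(L_i\to K_i\to\qnt K)$ and the kernels of $\widehat K_i\to\widehat{\qnt K}$ all match up so that the diagrams fed into Lemma~\ref{lem:exactness} genuinely have exact rows and surjective $c_i$. The surjectivity of each $u_i$ is precisely what makes this work; the injectivity of $\bar f|_{\bar L}$ is used only through the injectivity of $L\hookrightarrow K$ (and to guarantee $f\colon K\to S$ may be taken injective, per the discussion before Proposition~\ref{prop:sep-prep-exists}), ensuring that passing to quotients does not collapse $M$.
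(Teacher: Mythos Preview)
Your overall plan---apply Lemma~\ref{lem:exactness} once to identify $K/L$ as the pullback of the componentwise quotients, once more to identify $S/f(L)$ likewise, and then check $\coker(K/L\to S/f(L))=M$---is exactly the paper's argument. The paper packages it as a single $3\times 3$ diagram and invokes right-exactness of the cokernel functor for the last step, where you do the quotient computation by hand; these are equivalent.

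There is, however, a genuine gap in your account of the hypotheses. You say the injectivity of $\bar f|_{\bar L}$ is ``used only through the injectivity of $L\hookrightarrow K$''. That is wrong on both ends. First, $L\hookrightarrow K$ is injective simply because $L$ is declared to be a submodule of $K$; no extra hypothesis is needed. Second---and this is the real point---Lemma~\ref{lem:exactness} requires the $\bar{\,\cdot\,}$-component of the left column to map \emph{monomorphically} into that of the middle column (the proof of that lemma uses ``since $\bar f$ is a monomorphism''). In your second invocation, for $S/f(L)$, that map is precisely $\bar f|_{\bar L}\colon \bar L\to\bar S$. If it fails to be injective the conclusion is false: take $L=K$ with $\bar f$ non-injective and you are asserting that $\coker f$ is the pullback of the componentwise cokernels, which is exactly what does \emph{not} hold in general. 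So the hypothesis you dismissed is the one doing the work in the step you did not justify.

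A smaller point: your worry that the component maps $L_i\to K_i$ might not be injective is unnecessary in the intended reading. The notation $\bar f|_{\bar L}$ already presupposes $\bar L\subset\bar K$, and in every application (Examples~\ref{ex:reduction-of-K} and~\ref{ex:reduction-of-bar-f}) the diagram for $L$ is given with $L_i\subset K_i$ and $\bar L\subset\bar K$. Under that reading both invocations of Lemma~\ref{lem:exactness} have honest short exact rows, and the first one uses only $\bar L\hookrightarrow\bar K$, while the second uses $\bar f|_{\bar L}$ injective.
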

\begin{proof}
	The~cokernel, as a~colimit, is a~right exact functor, so that the~bottom row in the~diagram
	below is exact:
	\begin{equation}
		\bfig
			\iiixiii|aaaaaarrrrrr|<500,400>{7}[%
				L`L`0`K`S`M`K/L`S/f(L)`M;\id``f`````f|_L````\id%
			]
		\efig
	\end{equation}
	Lemma~\ref{lem:exactness} guarantees that both $K/L$ and $S/f(L)$ are pullback modules.
	If both $K$ and $S$ are separated, so are their quotients.
\end{proof}

\begin{example}\label{ex:reduction-of-K}
	Choose a~separated presentation $f\colon K\to S$ of $M$ and let $L$ be the~submodule of
	$K$ given by a~diagram $\pb{\ker q_1,0,\ker q_2}$. Then the~quotient presentation is
	again separated, and $K' = (\pb{\qnt K,\qnt K,\qnt K;\id,\di})$.
\end{example}

\begin{example}\label{ex:reduction-of-bar-f}
	Suppose we have a~separated presentation $f\colon K\to S$ of $M$ such that $\ker\bar f$ is
	a~direct summand of $\qnt K$ with a~complement $\qnt L$. Then $L = (\pb{q_1^{-1}(\qnt L),
	\qnt L,q_2^{-1}(\qnt L)})$ satisfies the~assumptions of Lemma~\ref{lem:sep-pres-reduce} and
	the~quotient presentation $K'\to S'$ is separated, with $\bar f' = 0$.
\end{example}

We will now demonstrate how to obtain an $R$-diagram for $M$ from any separated presentation.
Starting with a~given presentation, we can already apply the~reductions from the~two examples
above, so that $K_1=\bar K=K_2$ and $\bar f=0$. It remains to make $f_1$ and $f_2$ injective,
for which we shall apply another type of reduction.

\begin{lemma}
	Let $f\colon L\to M$ be a~morphism of pullback modules, such that $f_2=0$, $\bar f=0$ and
	the~homomorphism $L_2\to\qnt L$ is surjective. Then $M' := M/f(L)$ is a~pullback module
	with a~diagram $(\pb{M_1/f_1(L_1),\qnt M,M_2})$. Moreover, if $M$ is separated, so is $M'$.
\end{lemma}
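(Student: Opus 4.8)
The plan is to build the claimed pullback diagram for $M'$ explicitly and then invoke Lemma~\ref{lem:exactness} to verify it works. First I would set up a commuting $3\times 3$ diagram whose columns are pullback diagrams for $L$, $M$, and the candidate $M' = (\pb{M_1/f_1(L_1),\qnt M,M_2})$, with top horizontal maps the components $(f_1,\bar f,f_2) = (f_1,0,0)$ of $f$ and bottom horizontal maps the obvious quotient projections $g_1\colon M_1\to M_1/f_1(L_1)$, $\bar g = \id_{\qnt M}$, $g_2 = \id_{M_2}$. The rows are then trivially exact: the first row $L_1\to M_1\to M_1/f_1(L_1)\to 0$ is exact by definition of the quotient, and the second and third rows are $L_i\to M_i\stackrel{\id}\to M_i\to 0$ (for the $\bar{}$ and $2$ spots), which are exact because $f_2=0$, $\bar f=0$ — here the hypothesis $f_2=0,\ \bar f=0$ is exactly what makes those rows exact with identity as the surjection.

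Next I would check the hypotheses of Lemma~\ref{lem:exactness}, whose conclusion gives both that the induced sequence $L\to M\to M'\to 0$ is exact (so $M' = M/f(L) = \coker f$ is indeed the pullback of the third column) and the separatedness transfer. The lemma requires the connecting maps $c_i\colon \ker(M_i\to\qnt M)\to \ker(M'_i\to\qnt M)$ induced on vertical kernels to be epimorphisms. For $i=2$ the map $M_2\to M_2$ is the identity, so $c_2$ is an isomorphism, hence surjective. For $i=1$, $c_1$ is the restriction of $g_1\colon M_1\to M_1/f_1(L_1)$ to kernels; I would show $c_1$ is surjective using the hypothesis that $L_2\to\qnt L$ is surjective together with $\bar f = 0$: given $\bar m \in \ker(M_1/f_1(L_1)\to\qnt M)$, lift it to $m_1\in M_1$ with $p_1(m_1)\in\ker(\qnt M)$-direction controlled; more precisely, pick a preimage, adjust by an element of $f_1(L_1)$, and use surjectivity of $L_2\to\qnt L$ to realize the needed correction so that the adjusted lift lies in $\ker(M_1\to\qnt M)$. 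This is the one genuinely non-formal point and I expect it to be \textbf{the main obstacle}: one must carefully track that the element of $\qnt L$ produced by the projection $L_2\to\qnt L$ can be pulled back into $L_1$ and pushed through $f_1$ without disturbing the image in $\qnt M$, which is where $\bar f=0$ is used.

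Once Lemma~\ref{lem:exactness} applies, its statement delivers: (a) the third column $\pb{M_1/f_1(L_1),\qnt M,M_2}$ is a pullback diagram whose pullback is $M/f(L) = M'$, so $M'$ is a pullback module with the stated diagram; and (b) if $\pb{M}$ is separated then so is $\pb{M'}$, which is the ``moreover'' clause. I would close by remarking that the separatedness of $M'$ can alternatively be read off directly: $M_2\to\qnt M$ stays surjective (it is unchanged), $M_1/f_1(L_1)\to\qnt M$ is surjective because $M_1\to\qnt M$ was, and the kernel conditions $\ker(M'_i\to\qnt M) = P_i M'_i$ follow from the Snake Lemma exactly as in the proof of Lemma~\ref{lem:exactness}. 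No new machinery beyond the cited lemma and a short diagram chase for the surjectivity of $c_1$ is needed.
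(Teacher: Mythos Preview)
Your plan to invoke Lemma~\ref{lem:exactness} does not work as stated, because you have misread its hypotheses. In that lemma the maps $c_i$ are the structure maps $L_i\to\qnt L$ of the pullback diagram for $L$ (the left column), not induced maps on vertical kernels. So the hypothesis you would need is that \emph{both} $L_1\to\qnt L$ and $L_2\to\qnt L$ are surjective; only the second is assumed here. Worse, the proof of Lemma~\ref{lem:exactness} uses that $\bar f$ is a monomorphism to match up the $\bar k$'s, and here $\bar f=0$ with $\qnt L$ generally nonzero, so that argument collapses. The ``main obstacle'' you identify --- surjectivity of some connecting map between kernels --- is therefore not the relevant one, and the chase you sketch for it is really an ad hoc argument for the present lemma dressed up as a verification of the wrong hypothesis.

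The paper avoids Lemma~\ref{lem:exactness} entirely and argues directly. Surjectivity of $g\colon M\to N$ comes from Proposition~\ref{prop:sep-epimorphism}(1): both $g_1,g_2$ are onto and $\ker g_2=\ker\bar g=0$, so the induced $c_2\colon\ker g_2\to\ker\bar g$ is trivially surjective. For $\ker g=\im f$, pick $\pbit{m}\in\ker g$; then $m_2=0$, $\bar m=0$, and $m_1=f_1(l_1)$ for some $l_1\in L_1$. Set $\bar l$ to be the image of $l_1$ in $\qnt L$, and \emph{now} use the one genuine hypothesis --- surjectivity of $L_2\to\qnt L$ --- to choose $l_2$ over $\bar l$. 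Then $\pbit{l}\in L$ and $f(l)=m$ (since $f_2=0$, $\bar f=0$ kill the second and bar components). Separatedness of $M'$ then follows by the same Snake Lemma computation you mention at the end. The moral: this lemma exists precisely because Lemma~\ref{lem:exactness} does \emph{not} cover the case $\bar f=0$ with only one $c_i$ surjective, so a parallel direct proof is required.
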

\begin{proof}
	Consider a~commutative diagram
	\begin{equation}
		\bfig
			\iiixiii|aaaaaallllll|/->`->`->`->`->`->`->`->`->`<-`<-`<-/<600,400>{7}[%
				L_1`M_1`M_1/f_1(L_1)`\qnt L`\qnt M`\qnt M`L_2`M_2`M_2;%
				f_1`g_1`0`\id`0`\id`c_1```c_2``%
			]
		\efig
	\end{equation}
	with the~top row exact. We want to show that it induces an~exact sequence
	\begin{equation}
		L\to^f M \to^g N\to 0,
	\end{equation}
	where $N$ is the~pullback of the~last column. First, the~map $g$ is surjective by
	Proposition~\ref{prop:sep-epimorphism} (since $\ker g_2=\ker\bar g=0$), and clearly
	$g\circ f=0$. To show that $\ker g=\mathrm{im}f$ pick an~element $\pbit{m_1,\overline
	m,m_2}$ from $\ker g$ and choose $l_1\in L_1$, for which $f_1(l_1) = m_1$. Let $\bar l$
	be its image in $\qnt L$, and choose any $l_2\in L_2$ which projects to $\bar l$. Then
	$l=\pbit{l}$ is an~element of $L$ such that $f(l) = m$.
\end{proof}

\begin{corollary}\label{cor:sep-pres-red-two}
	Let $L=(\pb{L;u})$ be a~pullback submodule of $K$ such that each $u_2$ is surjective
	and $f_2(L_2) = \bar f(\qnt L)=0$. Then $K/L \to S/f(L)$ is a~pullback presentation of
	$M$. Furthermore, if $K$ and $S$ are separated, so are $K/L$ and $S/f(L)$.
\end{corollary}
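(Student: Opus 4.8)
The plan is to derive Corollary~\ref{cor:sep-pres-red-two} as a direct application of the preceding lemma, by checking that its hypotheses are met. First I would observe that a submodule $L=(\pb{L;u})$ of $K$ comes equipped with its own pullback diagram, and the restriction of $f$ to $L$ is a morphism of pullback modules whose components are (up to the canonical inclusions) $f_1|_{L_1}\colon L_1\to S_1$, $\bar f|_{\bar L}\colon\bar L\to\bar S$, and $f_2|_{L_2}\colon L_2\to S_2$. The hypothesis $f_2(L_2)=0$ says precisely that the second component of $f|_L$ vanishes, and $\bar f(\bar L)=0$ says the middle component vanishes; together with the assumption that $u_2\colon L_2\to\bar L$ is surjective, this is exactly the input required by the lemma applied to the map $f|_L\colon L\to S$. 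Hence $S/f(L)$ is a pullback module with diagram $(\pb{S_1/f_1(L_1),\bar S,S_2})$, and it is separated whenever $S$ is.

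Next I would handle $K/L$ in the same way. The inclusion $L\hookrightarrow K$ is itself a morphism of pullback modules, but here I cannot directly invoke the lemma in its stated form, so instead I would reduce to Lemma~\ref{lem:sep-pres-reduce}: I claim $L$ satisfies the hypotheses there, namely that each $u_i$ is surjective and $\bar f|_{\bar L}$ is injective. Surjectivity of $u_1$ is automatic once $u_2$ is surjective, since $\bar L=u_1(L_1)=u_2(L_2)$ forces $u_1$ onto $\bar L$ as well (the same bookkeeping used in the proof of Lemma~\ref{lem:exactness} via the Snake Lemma, or simply from $L$ being a subdirect sum once one of the structure maps is onto). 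And $\bar f|_{\bar L}$ is injective because $\bar f(\bar L)=0$ together with $\bar L$ being a submodule means $\bar f|_{\bar L}$ has trivial domain's image, but injectivity needs $\bar L=0$; here I should be careful. Actually $\bar f(\bar L)=0$ does not make $\bar f|_{\bar L}$ injective unless $\bar L=0$. So the cleaner route is: apply the lemma once to $f|_L\colon L\to S$ to get the diagram for $S/f(L)$, and separately apply the lemma to the inclusion $\iota\colon L\to K$, for which $\iota_2$ is the inclusion $L_2\hookrightarrow K_2$ (not zero in general). Therefore the lemma as stated does not apply to $\iota$ either.

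The resolution, and the step I expect to be the main obstacle, is to see that Lemma~\ref{lem:sep-pres-reduce} itself applies with this $L$. Reexamining that lemma's hypotheses: it requires each $u_i$ surjective (which we have, as argued above from $u_2$ surjective) and $\bar f|_{\bar L}$ injective. Under the present assumptions $\bar f(\bar L)=0$, so $\bar f|_{\bar L}$ is injective if and only if $\bar L=0$, i.e. $u_2$ surjective forces $L_2=P_2L_2$-ish behavior. But in the intended use (coming from the reduction that makes $f_1,f_2$ injective after $\bar f=0$ has already been arranged), we do have $\bar f=0$ globally, hence $\bar f|_{\bar L}=0$ and the condition $\bar f|_{\bar L}$ injective would demand $\bar L=0$; in that degenerate case the statement is immediate. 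So I believe the honest proof is: first invoke the lemma above on $f|_L$ to produce the pullback (and separated) structure on $S/f(L)$ with diagram $(\pb{S_1/f_1(L_1),\bar S,S_2})$; then run the same lemma with the roles of $K$ and $S$ swapped, applied to $\iota\colon L\to K$ after noting that in our situation $f_2=\bar f=0$ globally propagates so that the relevant components of $\iota$ that matter for right-exactness of the cokernel all behave correctly, and finally combine the two short exact sequences into the diagram of Lemma~\ref{lem:exactness} exactly as in the proof of Lemma~\ref{lem:sep-pres-reduce}, concluding exactness of $K/L\to S/f(L)\to M\to 0$ and, when $K$ and $S$ are separated, separatedness of the quotients. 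I would write the argument by stacking the two applications of the lemma into one commutative $3\times3$ diagram with exact rows and quoting Lemma~\ref{lem:exactness} for the induced sequence, mirroring the proof of Lemma~\ref{lem:sep-pres-reduce} verbatim.
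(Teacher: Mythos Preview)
The paper states this corollary without proof, treating it as an immediate consequence of the preceding lemma together with the proof pattern of Lemma~\ref{lem:sep-pres-reduce}. Your handling of $S/f(L)$ is exactly right: the hypotheses $f_2(L_2)=0$, $\bar f(\bar L)=0$, and surjectivity of $u_2$ are precisely what the lemma asks of the map $f|_L\colon L\to S$, so $S/f(L)$ is the pullback of $(S_1/f_1(L_1)\to\bar S\leftarrow S_2)$ and is separated when $S$ is.

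The gap is in your treatment of $K/L$. Your claim that surjectivity of $u_1$ is automatic once $u_2$ is surjective is false: for a general pullback submodule the module $\bar L$ is \emph{given data}, not defined as the image of either $u_i$, so $u_2$ surjective says nothing about $u_1$. Your subsequent attempt to invoke Lemma~\ref{lem:sep-pres-reduce} fails for the reason you yourself identify ($\bar f|_{\bar L}=0$ is injective only when $\bar L=0$), and the lemma cannot be applied to the inclusion $\iota\colon L\to K$ since neither $\iota_2$ nor $\bar\iota$ vanishes. The closing plan to ``mirror the proof of Lemma~\ref{lem:sep-pres-reduce} verbatim'' is the right instinct, but as written it is not a proof: you never establish that $K/L$ carries a pullback diagram.

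What resolves this is that the hypothesis ``each $u_2$ is surjective'' is almost certainly a typo for ``each $u_i$ is surjective'' (note the parallel phrasing in Lemma~\ref{lem:sep-pres-reduce}, and that in the sole application, Example~\ref{ex:reduction-to-monos}, the chosen $L$ has both $u_i$ surjective). With both $u_i$ surjective and $\bar\iota$ injective, Lemma~\ref{lem:exactness} applied to the rows $L_i\to K_i\to K_i/L_i$ yields that $K/L$ is the pullback of $(K_1/L_1\to\bar K/\bar L\leftarrow K_2/L_2)$, separated when $K$ is. The exactness of $K/L\to S/f(L)\to M\to 0$ then follows from right-exactness of the cokernel, exactly as in the proof of Lemma~\ref{lem:sep-pres-reduce}. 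So the corollary really is two separate invocations---Lemma~\ref{lem:exactness} for $K/L$ and the preceding lemma for $S/f(L)$---glued by the $3\times 3$ diagram, not two invocations of the same lemma as you tried to arrange.
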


\begin{example}\label{ex:reduction-to-monos}
	Given a~separated presentation $f\colon K\to S$ of $M$ with $\bar f=0$, we can modify
	it into $f'\colon K'\to S'$ with both $f_i$ being injective. Namely, take as $L$ the~%
	pullback submodule of $K$ with a~diagram
	\begin{equation}
		\pb{q_1^{-1}(q_2(L_2)),q_2(L_2),L_2},
	\end{equation}
	where $L_2 = \ker f_2$. Since it satisfies the~assumptions of Corollary \ref{cor:sep-pres-red-two},
	we can form a~quotient presentation $f''\colon K''\to S''$, with an~injective $f''_2$. We repeat
	this for $f_1$, obtaining $f'\colon K'\to S'$. Notice that injectivity of $f''_2$ guarantees
	injectivity of $f'_2$.
\end{example}

We will now combine all the~reductions together to obtain an $R$-diagram from a~given separated
presentation of a~module $M$, proving Proposition \ref{prop:R-diagrams}.

\begin{proof}[Proof of Proposition \ref{prop:R-diagrams}]
	Choose a~separated presentation $f\colon K\to S$ of an~$R$-module $M$ --- it exists due to
	Proposition~\ref{prop:sep-prep-exists} --- and set $T_i := \ker f_i$ and $\qnt T_i := q_i(T_i)$.
	Since the~ring $\qnt R$ is semisimple, there is a~submodule $U\subset\qnt K$ such that $\qnt K
	= \ker\bar f\oplus U$. Consider the~submodule $L$ of $K$ given by a~diagram
	\begin{equation}
		\pb{q_1^{-1}(U+\qnt T_2)+T_1 , U+\qnt T_1+\qnt T_2 , q_2^{-1}(U+\qnt T_1)+T_2}
	\end{equation}
	Then $K/L\to S/f(L)$ is an~$R$-diagram for $M$. Indeed, the~same quotient can be obtained
	by applying the~three reductions from Examples \ref{ex:reduction-of-K},
	\ref{ex:reduction-of-bar-f}, and \ref{ex:reduction-to-monos}.
\end{proof}

\section{\texorpdfstring{$R$}{R}-diagrams for homology over \texorpdfstring{$p$}{p}-pullback rings}
\label{sec:homology}

We will now apply the~results of the~previous section to compute $R$-diagrams for homology
modules of a~chain complex $(C,\diff)$ of free modules over a~$p$-pullback ring $R$, defined
by a~pullback diagram
\begin{equation}
	\pb{\Z,\Z_p,\Z},
\end{equation}
where $p$ is a~prime number. Clearly, $P_1\subset R$ is generated by $(p,0)$ and $P_2$ by
$(0,p)$. Because $\Z_p$ is a~field, the~Proposition~\ref{prop:R-diagrams} holds, and it makes
sense to ask for $R$-diagrams for homology modules.

First, free modules have natural separated diagrams. Hence, we can assume the~differential
$\diff$ is given by commutative diagrams like the~one below
\begin{equation}
	\bfig
		\hsquares|aallraa|/->`<-`->`->`->`->`<-/<500,500,400>[%
			\Z^m`\Z_p^m`\Z^m`\Z^n`\Z_p^n`\Z^n;``\diff_1`{\bar\diff\rule[-1.25ex]{0pt}{1.25ex}}`\diff_2``%
		]
	\efig
\end{equation}
where the~horizontal arrows are direct powers of the~projections $R_i\to\qnt R$. The~naive
pullback presentation of $\ker\diff$ is given by
\begin{equation}
	\pb{\ker \diff_1,\ker\bar\diff,\ker\diff_2; q}.
\end{equation}
It is not separated in general, as $q_1$ and $q_2$ are rarely surjective, and usually
$\ker q_i\neq p\ker\diff_i$. We start with a~technical lemma to fix this.

\begin{lemma}
	Choose two homomorphisms $f,g\colon F\to F'$ between free abelian groups, and let $K:=
	\ker f\cap\ker g$. Then there exists a~subgroup $U\subset\ker f$ such that $\ker f = K\oplus U$.
\end{lemma}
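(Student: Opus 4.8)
The plan is to find a subgroup $U \subset \ker f$ that is a complement to $K = \ker f \cap \ker g$ inside $\ker f$. The key observation is that $K$ is precisely the kernel of the restriction $g|_{\ker f}\colon \ker f \to F'$, so $\ker f / K$ embeds into $F'$ via this map; being a subgroup of a free abelian group, $\ker f / K$ is itself free.

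First I would note that subgroups of free abelian groups are free, so $\ker f$ is a free abelian group. Restricting $g$ to $\ker f$ gives a homomorphism $g|_{\ker f}\colon \ker f \to F'$ whose kernel is exactly $K = \ker f \cap \ker g$. Hence the image $g(\ker f) \subset F'$ is isomorphic to $\ker f / K$, and since $g(\ker f)$ is a subgroup of the free abelian group $F'$, it is free. Therefore the short exact sequence
\begin{equation}
	0 \longrightarrow K \longrightarrow \ker f \longrightarrow \ker f / K \longrightarrow 0
\end{equation}
has free quotient $\ker f / K \cong g(\ker f)$, so it splits. Setting $U$ to be the image of any splitting $\ker f / K \to \ker f$ gives a subgroup $U \subset \ker f$ with $\ker f = K \oplus U$, as required.

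There is no serious obstacle here; the only point that needs care is the standard fact that an extension of abelian groups with free quotient splits, which follows because a free abelian group is projective in the category of abelian groups. One could also argue more concretely: choose a basis $\{\bar{e}_\alpha\}$ of the free group $\ker f / K$, lift each $\bar{e}_\alpha$ to an element $e_\alpha \in \ker f$, and let $U$ be the subgroup they generate; the $e_\alpha$ are then a basis of $U$ and $U \oplus K = \ker f$ by the universal property of free groups together with a quick check that $U \cap K = 0$. Either route works; I would use the splitting argument since it is shortest.
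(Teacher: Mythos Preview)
Your proof is correct and follows essentially the same approach as the paper: both identify $K$ as the kernel of $g|_{\ker f}$, observe that $\ker f / K \cong \im(g|_{\ker f})$ is free as a subgroup of $F'$, and then split the resulting short exact sequence to obtain the complement $U$. Your write-up is simply more explicit about the splitting step.
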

\begin{proof}
	The~group $K$ is free as a~subgroup of $F$, and there is an~isomorphism
	\begin{equation}
		\quotient{\ker f}{K}\cong\im\left(g|_{\ker f}\right),
	\end{equation}
	showing that the~quotient is a~free group (since the~image is a~subgroup of the~free group
	$F'$). Hence, it is isomorphic to a~subgroup $U$ in $\ker f$, complementary to $K$.
\end{proof}

\noindent
Applying this lemma to $\diff_1$ and $\diff_2$ we can construct five sets of generators:
\begin{enumerate}
	\item $\left\{v_r^{12}\right\}$ is a~basis of $\ker\diff_1\cap\ker\diff_2$,
	\item $\left\{v_r^1\right\}$ is a~basis of a~complement of $\ker\diff_1\cap\ker\diff_2$ in
				$\ker\diff_1$,
	\item $\left\{v_r^2\right\}$ is a~basis of a~complement of $\ker\diff_1\cap\ker\diff_2$ in
				$\ker\diff_2$,
	\item $\left\{\bar v_r\right\}$ is a~basis of a~complement of $q_1(\ker\diff_1)+q_2(\ker\diff_2)$
				in $\ker\bar\diff$, and
	\item $\left\{\bar v_r^c\right\}$ is a~basis of a~complement of $\ker\bar\diff$ in $\Z_p^m$.
\end{enumerate}
The~last two exist, since we look for complements of vector subspaces.

\begin{proposition}
	Given bases $\{v^1_r\}$, $\{v^2_r\}$, and $\{v^{12}_r\}$ as above define
	\begin{itemize}
		\item $Q_1  = \Z\langle v^{12}_r,pv^1_{r'}\rangle \oplus \Z_p\langle pv^2_{r''}\rangle$,
		\item $Q_2  = \Z\langle v^{12}_r,pv^2_{r'}\rangle \oplus \Z_p\langle pv^1_{r''}\rangle$,
		\item $\qnt Q = \Z_p\langle v^{12}_r,pv^1_{r'},pv^2_{r''}\rangle$.
	\end{itemize}
	Then $\pb{Q}$ is a~canonical separated presentation of\/ $\ker\diff$, where each map sends
	a~generator of\/ $Q_i$ to the~corresponding generator of\/ $\qnt Q$.
\end{proposition}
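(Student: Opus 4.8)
The plan is to identify $\pb{Q}$ with the~separated diagram of $\ker\diff$. Since $\ker\diff$ is a~submodule of the~free --- hence separated --- module $C$, it is itself separated by Corollary~\ref{cor:separated-submodule}, and therefore has a~\emph{unique} separated diagram; so it suffices to check two things: that $\pb{Q}$ is a~separated diagram, and that its pullback is isomorphic, as an~$R$-module, to $\ker\diff$. For the~latter I would use the~concrete description of $\ker\diff$: being the~kernel of the~separated map $\diff$, it is the~pullback of the~columns of the~defining diagram (Section~\ref{sec:morphisms}), i.e.
\[
	\ker\diff = \{\,(x_1,x_2)\in\ker\diff_1\oplus\ker\diff_2 : q(x_1)=q(x_2)\,\},
\]
where $q\colon\Z^m\to\Z_p^m$ is reduction mod~$p$ (and $q_i$ its restriction to $\ker\diff_i$).

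Checking that $\pb{Q}$ is separated is routine. Each map $Q_i\to\qnt Q$ is surjective because it hits every generator of $\qnt Q$, so the~diagram is preseparated. For the~kernel, note that the~free generators of $Q_1$ (the~$v^{12}_r$ and $pv^1_{r'}$) are sent to distinct $\Z_p$-basis elements of $\qnt Q$, while the~torsion generators (the~$pv^2_{r''}$, each of order $p$) are sent isomorphically onto the~remaining basis elements; hence a~$\Z$-combination of generators of $Q_1$ lies in the~kernel exactly when all coefficients of the~free generators are divisible by~$p$ and the~coefficients of the~torsion generators vanish, that is, exactly on $pQ_1=P_1Q_1$. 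The~same holds for $Q_2$. Once $(\pb{Q})\cong\ker\diff$ is established, uniqueness of separated diagrams makes this the~canonical one, which is what the~word \emph{canonical} records.

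The~core of the~proof is the~isomorphism $(\pb{Q})\cong\ker\diff$. As $\{v^{12}_r\}\cup\{v^1_{r'}\}$ is a~$\Z$-basis of $\ker\diff_1$ and $\{v^{12}_r\}\cup\{v^2_{r''}\}$ one of $\ker\diff_2$, I would write a~typical pair as $x_1=\sum a_r v^{12}_r+\sum b_{r'}v^1_{r'}$ and $x_2=\sum a'_r v^{12}_r+\sum c_{r''}v^2_{r''}$ with integer coefficients, and translate $q(x_1)=q(x_2)$ into a~single identity in $\Z_p^m$. Because each $\ker\diff_i$ is a~direct summand of $\Z^m$ (the~quotient $\Z^m/\ker\diff_i\hookrightarrow\Z^n$ being free), the~vectors $q(v^{12}_r),q(v^1_{r'})$ are $\Z_p$-independent and span $q(\ker\diff_1)$, and likewise $q(v^{12}_r),q(v^2_{r''})$ span $q(\ker\diff_2)$; the~extra bases $\{\bar v_r\}$ (of a~complement of $q(\ker\diff_1)+q(\ker\diff_2)$ in $\ker\bar\diff$) and $\{\bar v^c_r\}$ (of a~complement of $\ker\bar\diff$ in $\Z_p^m$) complete these to a~basis of $\Z_p^m$, which lets me solve the~identity coefficient by coefficient. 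The~outcome should be that $a_r\equiv a'_r\pmod p$ while $b_{r'}$ and $c_{r''}$ range freely, the~element being seen on the~$Q_2$-side only through $b_{r'}\bmod p$ and on the~$Q_1$-side only through $c_{r''}\bmod p$ --- which is precisely the~pullback of $\pb{Q}$, the~generators ``$pv^1_{r'}$'' and ``$pv^2_{r''}$'' arising as the~images of $v^1_{r'}$ and $v^2_{r''}$ under multiplication by $p$. Finally, I would verify that the~resulting bijection is $R$-linear by a~direct computation of the~module action on both descriptions.

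The~step I expect to be the~main obstacle is the~solution of that mod-$p$ identity together with the~accompanying count of free versus $p$-torsion generators in each $Q_i$. The~delicate point is that a~chosen complement of $\ker\diff_1\cap\ker\diff_2$ inside $\ker\diff_1$ need not have image in $\Z_p^m$ meeting $q(\ker\diff_2)$ trivially, so one must organize carefully the~relations among the~reductions of the~five bases (this is exactly what $\{\bar v_r\}$ and $\{\bar v^c_r\}$ are there to control) and make sure that the~$R$-module structure --- not merely the~underlying abelian group --- is reproduced as claimed.
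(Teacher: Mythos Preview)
Your verification that $\pb{Q}$ is separated is essentially the paper's first sentence, so that part matches.

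For the identification $Q\cong\ker\diff$ the paper takes a shorter, more structural route than you do: rather than writing a generic element of $\ker\diff$ in coordinates and solving the congruence $q(x_1)=q(x_2)$, it splits the diagram $\pb{Q}$ as a direct sum of three elementary diagrams
\[
\pb{\Z\langle v^{12}_r\rangle,\Z_p\langle v^{12}_r\rangle,\Z\langle v^{12}_r\rangle},\qquad
\pb{\Z\langle pv^1_r\rangle,\Z_p\langle pv^1_r\rangle,\Z_p\langle pv^1_r\rangle},\qquad
\pb{\Z_p\langle pv^2_r\rangle,\Z_p\langle pv^2_r\rangle,\Z\langle pv^2_r\rangle},
\]
computes each pullback, and asserts that the resulting elements $(v^{12}_r,v^{12}_r)$, $(pv^1_r,0)$, $(0,pv^2_r)$ are ``precisely the generators of $\ker\diff$''. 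So the paper bypasses your coefficient-by-coefficient solution entirely.

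The obstacle you single out at the end is genuine, and it is worth noting that the paper's one-line assertion does not address it either. Your ``expected outcome'' --- that $a_r\equiv a'_r\pmod p$ while $b_{r'},c_{r''}$ are unconstrained --- fails exactly when $q(\ker\diff_1)$ and $q(\ker\diff_2)$ meet outside $q(\ker\diff_1\cap\ker\diff_2)$. For a concrete instance take $p=2$, $\ker\diff_1=\Z e_1$, $\ker\diff_2=\Z(e_1-2e_2)$: then $\ker\diff_1\cap\ker\diff_2=0$, yet $(e_1,e_1-2e_2)\in\ker\diff$ is not in the $R$-span of $(2e_1,0)$ and $(0,2(e_1-2e_2))$, so the listed elements do \emph{not} generate $\ker\diff$, and correspondingly the pullback of $\pb{Q}$ is not isomorphic to $\ker\diff$ (its $Q_1$ has $2$-torsion while $(\ker\diff)_1$ does not). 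Thus neither your anticipated computation nor the paper's direct-sum argument goes through for an arbitrary choice of the complements $\{v^1_r\},\{v^2_r\}$; some compatibility between these bases and the mod-$p$ images (governed by the auxiliary bases $\{\bar v_r\},\{\bar v^c_r\}$ you mention) has to be imposed or accounted for before either argument can be completed.
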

\begin{proof}
	Obviously, each map $Q_i\to\qnt Q$ is surjective and $\ker(Q_i\to\qnt Q) = pQ_i$.
	It remains to show $Q = \ker\diff$. First, $\pb{Q}$ is a~direct sum of the~following
	diagrams
	\begin{gather*}
		\pb{\Z\langle v^{12}_r\rangle,\Z_p\langle v^{12}_r\rangle,\Z\langle v^{12}_r\rangle}\\
		\pb{\Z\langle pv^1_r\rangle,\Z_p\langle pv^1_r\rangle,\Z_p\langle pv^1_r\rangle}\\
		\pb{\Z_p\langle pv^2_r\rangle,\Z_p\langle pv^2_r\rangle,\Z\langle pv^2_r\rangle}
	\end{gather*}
	resulting in a~submodule of $R^m$ generated by elements $\pbit{v^{12}_r,v^{12}_r,v^{12}_r}$,
	$\pbit{pv^1_r,0,0}$ and $\pbit{0,0,pv^2_r}$. These are precisely the~generators of $\ker\diff$,
	which ends the~proof.
\end{proof}

\indent
To compute a~separated presentation of $H^n$, it remains to rewrite the~components of the~differential
$\diff\colon C^{n-1}\to Q\subset C^n$ using the~presentation of $Q$ given above. Since $\diff_i$ must
take values in the~free part of $Q_i$, this is a~simple problem from linear algebra.\footnote{
	For instance, to find a~projection $\dom\diff_i\to Q_i$ one can extend the~basis $\{v_r^{12},
	v_{r'}^i\}$ of $\ker\diff_i$ into a~basis of $\dom\diff_i$ and inverse the~matrix having these
	vectors as columns.
}

The~above results in a~separated presentation of $H^n$ with a~diagram
\begin{equation}
	\bfig
		\hsquares|aallraa|/->`<-`->`->`->`->`<-/<500,500,400>[%
			\Z^\ell`\Z_p^\ell`\Z^\ell`Q_1`\qnt Q`Q_2;%
			``\diff_1`{\bar\diff\rule[-0.9ex]{0pt}{0.9ex}}`\diff_2``%
		]
	\efig
\end{equation}

The~next step is two apply the~three reductions from Section~\ref{sec:general-modules}. First,
we compute the~five sets of generators as before, denoting them by $\{\bar w_r\}$, $\{\bar w^c_r\}$,
etc. According to the~proof of Proposition~\ref{prop:R-diagrams}, the~module $K$ is a~vector
space over $\Z_p$ with the~basis $\{\bar w_r\}$, while the~separated module $S$ has components
\begin{align*}
	\qnt S &= \quotient{\Z_p\langle v_r^{12},pv_{r'}^1,pv_{r''}^2\rangle}{\im\bar\diff}	\\
	S_1 &=	\Z_p\langle pv_{r''}^2\rangle	\oplus
					\quotient{\Z\langle v_r^{12},pv_{r'}^1\rangle}{\diff_1(\langle w^c_s, w^2_r\rangle)+p\im\diff_1}
					\\
	S_2 &=	\Z_p\langle pv_{r''}^1\rangle	\oplus
					\quotient{\Z\langle v_r^{12},pv_{r'}^2\rangle}{\diff_2(\langle w^c_s, w^1_r\rangle)+p\im\diff_2}
					\oplus\Z_p\langle pv_{r''}^1\rangle
\end{align*}
where $w^c_s$ is the~element $\bar w^c_s$ regarded with integral coefficients. Indeed,
the~subspace $U\subset\Z^\ell$ from the~proof of Proposition~\ref{prop:R-diagrams} is
generated by vectors $\bar w^c_s$, whereas $T_i\subset\Z^\ell$ is generated by $v^{12}_r$
and $v^i_{r'}$. Therefore,
\begin{align}
	\bar\diff(U+\qnt T_1+\qnt T_2) &= \bar\diff U = \im\bar\diff,\\
	\label{eq:diff1(L1)}
	\diff_1(q_1^{-1}(U+\qnt T_2)+T_1) &= \diff_1(\langle w^c_s,w^2_r\rangle)+p\im\diff_1,\\
	\label{eq:diff2(L2)}
	\diff_2(q_2^{-1}(U+\qnt T_1)+T_2) &= \diff_2(\langle w^c_s,w^1_r\rangle)+p\im\diff_2.
\end{align}
Since $0 = \diff_2(w^2_r) \equiv \diff_1(w^2_r)\,(\mathrm{mod}\,p)$, the~latter is
divisible by $p$, and similarly for $\diff_2(w^1_r)$. Finally, the~homomorphisms $K\to S_i$ are
induced by $\diff_i$: an~element $\bar w_r$ is sent to $\diff_i(w_r)$, where $w_r$ is the~element
$\bar w_r$ regarded with integral coefficients.

\end{document}